\newtheorem{definition}{Definition}
\newtheorem{remark}{Remark}
\newtheorem{proposition}{Proposition}
\newtheorem{theorem}{Theorem}
\newtheorem{lemma}{Lemma}
\title{\textbf{On Separation Between Learning and Control in Partially Observed Markov Decision Processes}}
\author{Andreas A. Malikopoulos\\ Terri Connor Kelly and John Kelly Career Development Professor\\ University of Delaware}
\date{}
\begin{document}

\maketitle

\begin{center}
	\textbf{Abstract}
\end{center}
Cyber-physical systems (CPS) encounter a large volume of data which is added to the system gradually in real time and not altogether in advance. As the volume of data  increases, the domain of the control strategies also increases, and thus it becomes challenging to search for an optimal strategy. Even if an optimal control strategy is found, implementing such strategies with increasing domains is burdensome. To derive an optimal control strategy in CPS, we typically assume an ideal model of the system. Such  model-based control approaches cannot effectively facilitate optimal solutions with performance guarantees due to the discrepancy between the model and the actual CPS. Alternatively, traditional supervised learning approaches cannot always  facilitate robust solutions using data derived offline. Similarly, applying reinforcement learning approaches directly to the actual CPS might impose significant implications on safety and robust operation of the system. The  goal of this chapter is to provide a theoretical framework that  aims at separating the control and learning tasks which allows us to combine offline model-based control with online learning approaches, and thus circumvent the challenges in deriving optimal control strategies for CPS.

\section{Introduction}\label{intro}
Cyber-physical systems (CPS) encounter a large volume of data which is added to the system gradually in real time and not altogether in advance as for example in emerging mobility systems  \cite{zhao2019enhanced}, networked control systems \cite{Hespanha:2007aa,Zhang:2020aa}, mobility markets \cite{chremos2020SharedMobility,chremos2020MobilityMarket}, smart power grids \cite{Khaitan:2013aa,Howlader:2014aa}, power systems \cite{Du:2017aa}, cooperative cyber-physical networks \cite{Pasqualetti:2015aa,Sami:2016aa,Clark:2017aa}, social media platforms \cite{Dave2020SocialMedia}, cooperation of robots \cite{Jadbabaie:2003aa,Saulnier:2017aa,Beaver2020AnFlockingb}, and internet of things \cite{Li:2016aa,Xu:2018aa,Ansere:2020aa}.
In such  applications, we typically use a model to derive the optimal control strategy of the system. However, model-based control approaches cannot effectively facilitate optimal solutions due to the discrepancy between the model and the actual system. On the other hand, traditional supervised learning approaches cannot always  facilitate robust solutions using data derived offline. Similarly, applying reinforcement learning approaches directly to the actual CPS might impose undesired implications on safety and robust operation of the system. The overarching goal of this chapter is to investigate how to circumvent these challenges at the intersection of learning and control. 

The evolution of the state of many CPS, in several instances,  can be appropriately represented by a  Markov decision process (MDP) or partially observed MDP (POMDP). Reinforcement learning (RL) \cite{Bertsekas1996,Sutton1998a} has  been  widely used as an adaptive approach \cite{Narendra1989StableAS,Sastry1989AdaptiveCS,strm1989AdaptiveC,Ioannou2012RobustAC}  to derive the optimal control strategy in MDPs and POMDPs \cite{Sutton:1992ub} where a model of the system might not be available \cite{Dydek2013AdaptiveCO,Leman2009L1AC}.
In particular, research efforts have focused on addressing MDPs and POMDPs either with direct or indirect RL methods including  robust learning-based approaches \cite{Aswani:2013ue,Bouffard:2012wp}, learning-based model predictive control \cite{Hewing:2020uv,Rosolia:2018wv,Zhang:2020wf}, optimization of powertrain operation of vehicles with respect to the driver's driving style \cite{Malikopoulos2010a,Malikopoulos2011}, planning of autonomous vehicles \cite{You:2019va}, traffic control in simulation and scaled experiments \cite{Wu2017FlowAA,Wu:2017uz,Vinitsky:2018vx,jang2019simulation,chalaki2020ICCA,chalaki2020hysteretic}, decentralized learning for stochastic games \cite{Arslan:2017vo}, optimal social routing  \cite{Krichene:2018we}, congestion games \cite{Krichene:2015vx}, and  enhanced security against replay attacks in CPS
\cite{Zhai:2021wy,Sahoo:2020tx}. 

Implications on robustness of optimal control strategies derived using an ``incorrect" model and applied to the actual system have been discussed in \cite{Kara:2018vu}. Other efforts have focused on approximate planning and learning in POMDPs using an information state \cite{Subramanian2020ApproximateIS}. This approach provides a constructive way for RL in partially observed systems. More recent efforts have also combined model reference adaptive control with RL to generate online policies \cite{Guha2021OnlinePF}. 
Two recent survey papers \cite{Kiumarsi:2018tq,Recht2018ATO} provide a comprehensive review of the general RL problem formulations along with a complete list of applications.

In this chapter, we present a framework in which we aim at finding sufficient statistics to compress the growing data of the system without loss of optimality using a conditional probability of the state of the system at time $t\in\mathbb{R}_{\ge0}$ given all data available up until $t$. This conditional probability is called \textit{information state} of the system and takes values in a time-invariant space. We  use this information state to derive \textit{separated control strategies.} Separated control strategies are related to the concept of separation between the estimation of the information  state and control of the system. An important consequence of this separation is that for any given choice of control strategies and a realization of the system's variables until time $t$, the information states of the system at future times do not depend on the choice of the control strategy at time $t$ but only on the realization of the control at time $t$ \cite{Kumar1986}. Thus, the future information states are separated from the choice of the current control strategy. By establishing separated control strategies, we can derive offline the optimal control strategy of the system with respect to the information state, which takes values in a time-invariant space, and then use standard learning techniques  \cite{Brand:1999aa,Gyorfi:2007aa} to learn the information state online while data are added gradually to the system. This approach could effectively facilitate optimal solutions with performance guarantees in a wide range of complex CPS \cite{Malikopoulos2016c}.

The structure of this chapter is organized as follows. In Section \ref{sec:2}, we present the modeling framework and  formulation of the optimal control problem. In Section \ref{sec:3}, we provide results on separated control strategies. In Section \ref{sec:4}, we illustrate the  framework with a simple example. Finally, we provide concluding remarks and discuss potential directions for future research in Section \ref{sec:5}.

\subsection{Notation}
We denote random variables with upper case letters, and their realizations with lower case letters, e.g., for a random variable $X_t$, $x_t$ denotes its realization. Subscripts denote time. The expectation of a random variable is denoted by $\mathbb{E}[\cdot]$, the probability of an event is denoted by $\mathbb{P}(\cdot)$, and the probability density function is denoted by $p(\cdot)$. 
For a control strategy $\bf{g}$, we use $\mathbb{E}^{\bf{g}}[\cdot]$, $\mathbb{P}^{\bf{g}}(\cdot)$, and $p^{\bf{g}}(\cdot)$ to denote that the expectation, probability, and probability density  function, respectively, depend on the choice of the control strategy $\bf{g}$. For two measurable spaces $(\mathcal{X}, \mathscr{X})$ and $(\mathcal{Y}, \mathscr{Y})$, $\mathscr{X}\otimes\mathscr{Y}$ is the product $\sigma$-algebra on $\mathcal{X}\times \mathcal{Y}$ generated  by the collection of all measurable rectangles, i.e., $\mathscr{X}\otimes\mathscr{Y}\coloneqq \sigma(\{A\times B: A\in\mathscr{X}, B\in\mathscr{Y} \})$. The product of  $(\mathcal{X}, \mathscr{X})$ and $(\mathcal{Y}, \mathscr{Y})$ is the measurable space $(\mathcal{X}\times \mathcal{Y}, \mathscr{X}\otimes\mathscr{Y})$. 

\section{Separated Control Strategies}
\label{sec:2}
We  consider a system evolving as a POMDP  in which there is  a large volume of data that is added to the system gradually and not altogether in advance. 
We seek to separate the control and learning tasks in the system which can eventually help us combine offline model-based control with online learning approaches.  In particular, we aim at finding sufficient statistics to compress the growing data of the system without loss of optimality  using a conditional probability of the state of the system at time $t$ given all the data available up until time $t.$ This conditional probability is called information state and it takes values in a time-invariant space. Using this information state, we can derive results for optimal control strategies in a time-invariant domain. Results based on data which, even though they increase with time, are compressed to a sufficient statistic taking values in a time-invariant space are called \textit{structural results} (see \cite{Krishnamurthy2016}, p. 203). Structural results can help us establish separated control strategies \cite{Malikopoulos2021}, and thus they are related to the concept of separation between estimation and control. An important consequence of this separation is that for any given choice of control strategies and a realization of the system’s variables until time $t,$ the information states at future times do not depend on the choice of the control strategy at time $t$ but only on the realization of the decision at time $t$ (see \cite{Kumar1986}, p. 81). 
Thus, the future information states are separated from the choice of the current control strategy. The latter is necessary in order to formulate a classical dynamic program, where at each step the optimization problem is to find the optimal decision for a given realization of the information state \cite{Howard,Bertsekas2017}. 
By establishing separated control strategies, we can derive offline the optimal control strategy of the system with respect to the information state, which might not be precisely known due to model uncertainties or complexity of the system, and then use  standard learning techniques  to learn the information state online while data are added gradually to the system in real time. Structural results can also help us derive optimal strategies in decentralized systems \cite{Mahajan2016,Nayyar:2014aa,Nayyar2013b,Dave2020a,Dave2021a}.

\begin{figure}
	\centering
	\includegraphics[width=0.6\linewidth, keepaspectratio]{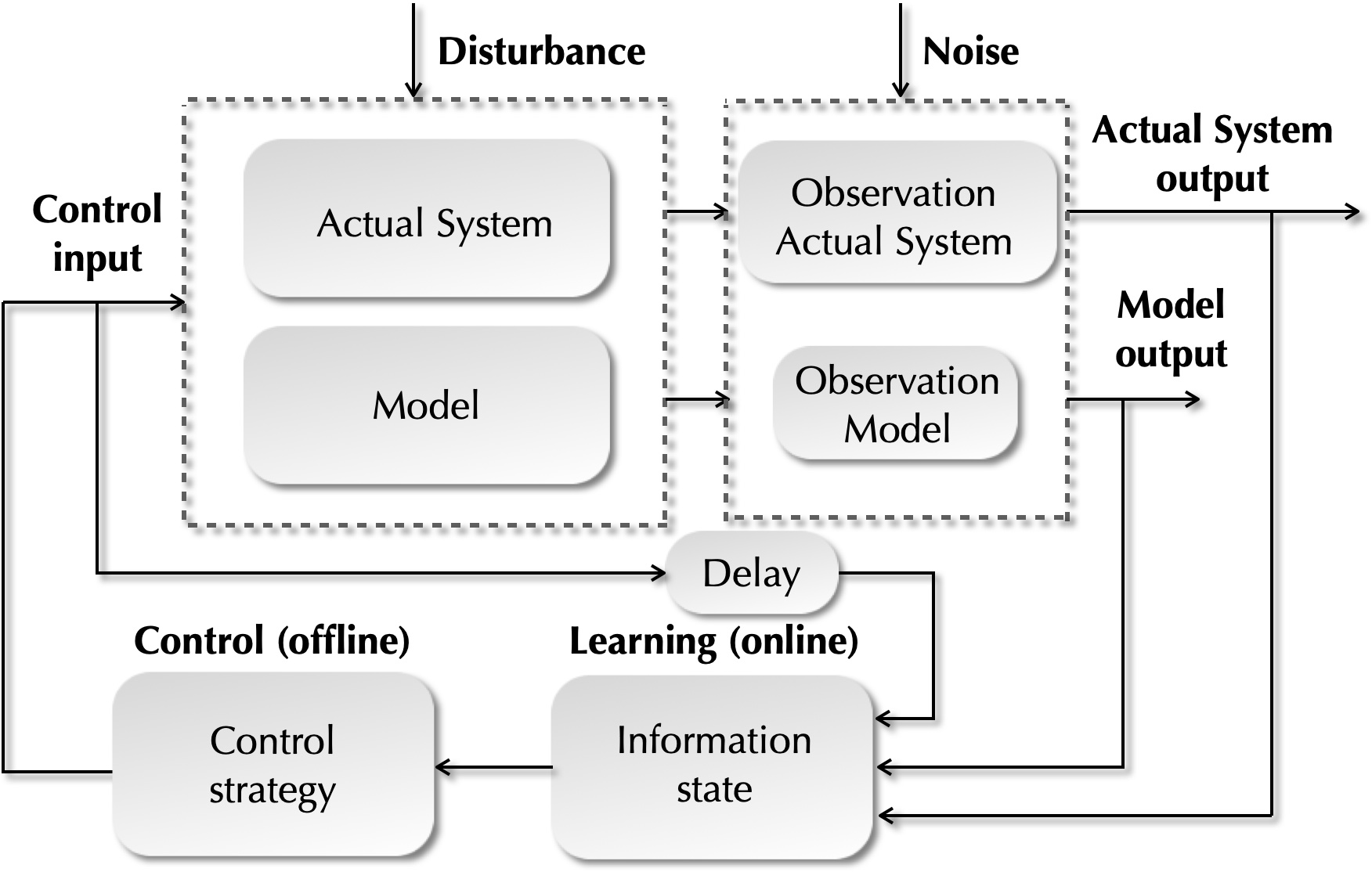} 
	\caption{Illustration of the proposed framework.}%
	\label{fig:1}%
\end{figure}

More specifically, in our framework illustrated in Fig. \ref{fig:1}, we use the actual system that we seek to optimally control online, in parallel with a  model of the system that we have available. We establish an information state which is the conditional joint probability distribution of the states of the model and the actual system at time $t$ given all data available of the model up until time $t$, i.e., $p(\text{state of model, state of actual system}~ |$ $~ \text{data of the model})$. 
Then, we use this information state in conjunction with the model to derive offline separated control strategies.
Since we derive the optimal strategies offline, the state of the actual system  is not known, i.e., the actual system operates only online, and thus the optimal strategy of the model is parameterized with respect to all realizations of the  state of the actual system. However,  since the control strategy and the process of estimating the information state are separated,  we can learn the information state of the system online, while we operate simultaneously the model and the actual system in real time. Namely,  the optimal strategy derived for the  model offline, which is parameterized with respect to the state of the actual system, is used to operate the actual system in parallel with the model. As we operate both the actual system and the model and collect data, we can learn the information state online.
In our exposition, we show that when the information state becomes known online through learning, the separated control strategy of the model derived offline is optimal for the actual system.

\subsection{An Illustrative CPS Application: Separation Between Learning and Control }

In this section, we outline how we could potentially separate the learning and control tasks in a CPS application.
Consider a number of connected and automated vehicles (CAVs) that need to coordinate in a given traffic scenario, e.g., crossing a signal-free intersection (Fig. \ref{fig:3}), in which a large volume of data by CAVs and infrastructure is produced  gradually in real time. 
CAVs are typical CPS where the cyber component (data and shared information through vehicle-to-vehicle and vehicle-to-infrastructure communication) can aim at optimally controlling the physical entities (CAVs, non-CAVs). 

\begin{figure}
	\centering
	\includegraphics[width=0.6\linewidth, keepaspectratio]{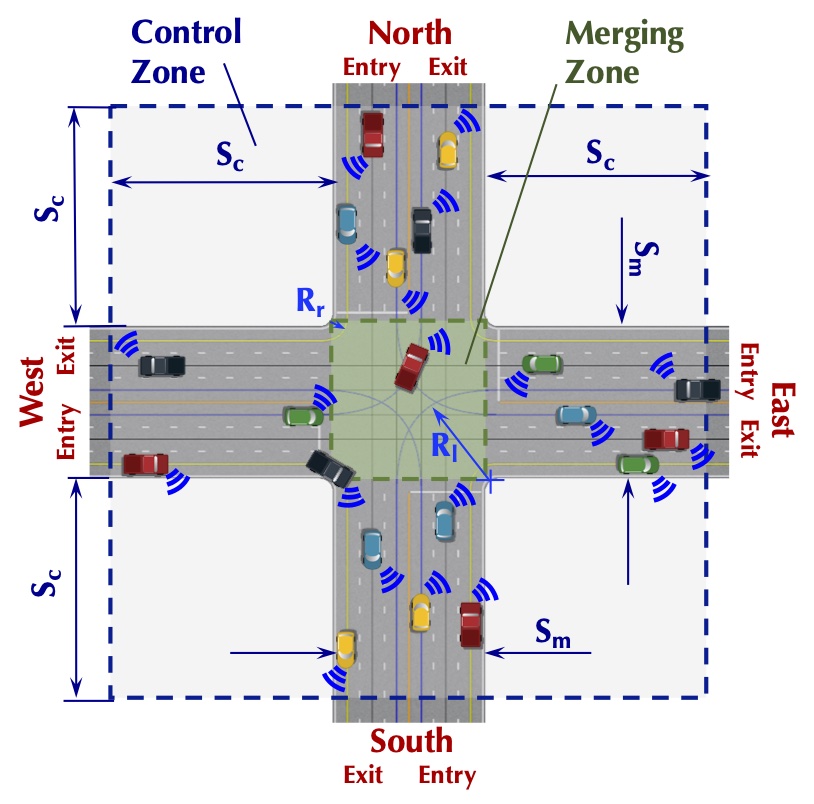} 
	\caption{A signal-free intersection with connected and automated vehicles.}%
	\label{fig:3}
\end{figure}

The region at the center of the intersection, called \textit{merging zone}, is the area of potential lateral collision of the vehicles. The intersection has also a \textit{control zone} (Fig. \ref{fig:3}) inside of which the CAVs can communicate with each other and the infrastructure to exchange information, e.g., share their position, speed, occupied lane, and route within the control zone. The objective is to derive a control strategy for the CAVs to cross the intersection by minimizing fuel consumption without the use of traffic lights, without creating congestion, and under the hard safety constraint of collision avoidance. 
The actual system consists of the individual CAVs inside the control zone. The state of the system  is the position and speed of CAVs inside the control zone, the control input is the acceleration/deceleration of each CAV, and the cost function is the fuel consumption of all CAVs.

To compute the optimal control input for each CAV, we typically model each CAV as a double integrator and consider perfect communication among the CAVs, e.g., without any delays \cite{Malikopoulos2020}. This allows us to derive a closed-form analytical solution \cite{malikopoulos2019ACC}, which exists under certain conditions \cite{mahbub2020Automatica-2}, that yields for each CAV the optimal control input (acceleration/deceleration) at any time in the sense of minimizing fuel consumption. However, implementing this solution in physical CAVs (the actual system) imposes significant implications due the discrepancy between the double integrator model and the dynamics of a physical CAV in addition to the presence of communication delays \cite{Zhao2018ITSC}. First, due to the existing delays in the communication among the CAVs,  the precise position and speed of each CAV inside the control zone is not known to other CAVs. Therefore, the  position and speed trajectories of each CAV  resulting from the solution using the double integrator models might activate state, control, and safety constraints within the control and merging zones. Second, depending on their size and weight, some CAVs might not be able to follow the optimal control input (acceleration/deceleration) given by the solution since the optimal control input is derived using a double integrator model which is far from being able to capture the dynamics of a vehicle. Thus, the actual position and speed trajectories of each CAV, and as a result, the true state of the system (actual system) will be different from what it is expected by the solution. Learning the policy online might impose undesirable implications on safety and robust operation of the system.

To separate learning and control in this application, the first step is to establish an information state which is the conditional joint probability distribution of the speed and position of the CAVs resulted by the double integrator models and the actual ones resulted by the physical CAVs at time $t$ given all data available by the double integrator models up until time $t$, i.e., $p(\text{state of CAVs by double integrator models, state of}$ $\text{physical CAVs data of the}~ |$ $~ \text{ double integrator models})$.  
Then, we use this information state along with the double integrator models to derive offline the optimal control input (acceleration/deceleration profile) of the CAVs.
Since we derive the optimal control input of each CAV offline, the state of the actual system (position and  speed of the physical CAVs) is not known, so the control input is parameterized with respect to different possible realizations of the speed and position of the physical CAVs. The next step is to implement the parameterized control input derived offline in the physical CAVs. 
This step is essentially the same as the one of the original approach described above with the only difference being that the control input now is parameterized. However, as we operate simultaneously the physical CAVs and the double integrator models, we collect data and  learn the information state of the system online. 
When the information state becomes known online through learning, the control input derived offline using the double integrator models is optimal for the physical CAVs.

%%%%%%%%%%%%%%%%%%%%%%%%%%%%%%%%%%%%
%%%%%%%%%%%%%%%%%%%%%%%%%%%%%%%%%%%%
%SECTION II: Modeling Framework
%%%%%%%%%%%%%%%%%%%%%%%%%%%%%%%%%%%%
%%%%%%%%%%%%%%%%%%%%%%%%%%%%%%%%%%%%
\subsection{Modeling Framework} 
\label{sec:2a}

We  consider a system with a measurable state space $(\mathcal{X}_t, \mathscr{X}_t)$, where $\mathcal{X}_t$ is the set in which the system state takes values at time $t = 0,1,\ldots,T$, $T\in\mathbb{N}$, and $\mathscr{X}_t$ is the associated $\sigma$-algebra. Let $X_t$ be a random variable that represents the state of the model of the system and $\hat{X}_t$ be a random variable that represents the state of the actual system. Both  random variables are defined on the probability space $(\Omega, \mathscr{F}, \mathbb{P})$, i.e., $X_t: (\Omega, \mathscr{F})\to(\mathcal{X}_t, \mathscr{X}_t)$,  $\hat{X}_t: (\Omega, \mathscr{F})\to(\mathcal{X}_t, \mathscr{X}_t)$, where $\Omega$ is the sample space, $\mathscr{F}$ is the associated $\sigma$-algebra, and $\mathbb{P}$ is a probability measure  on $(\Omega, \mathscr{F})$. The control of the actual system is represented by a random variable $U_t: (\Omega, \mathscr{F})\to(\mathcal{U}_t, \mathscr{U}_t),$ defined on the probability space $(\Omega, \mathscr{F}, \mathbb{P})$, and takes values in the measurable space $(\mathcal{U}_t, \mathscr{U}_t)$,  where $\mathcal{U}_t$ is  the system's nonempty feasible set of actions at time $t$ and $\mathscr{U}_t$ is the associated $\sigma$-algebra. The actual system and its corresponding model are illustrated in Fig. \ref{fig:2}.

\begin{figure}
	\centering
	\includegraphics[width=0.7\linewidth, keepaspectratio]{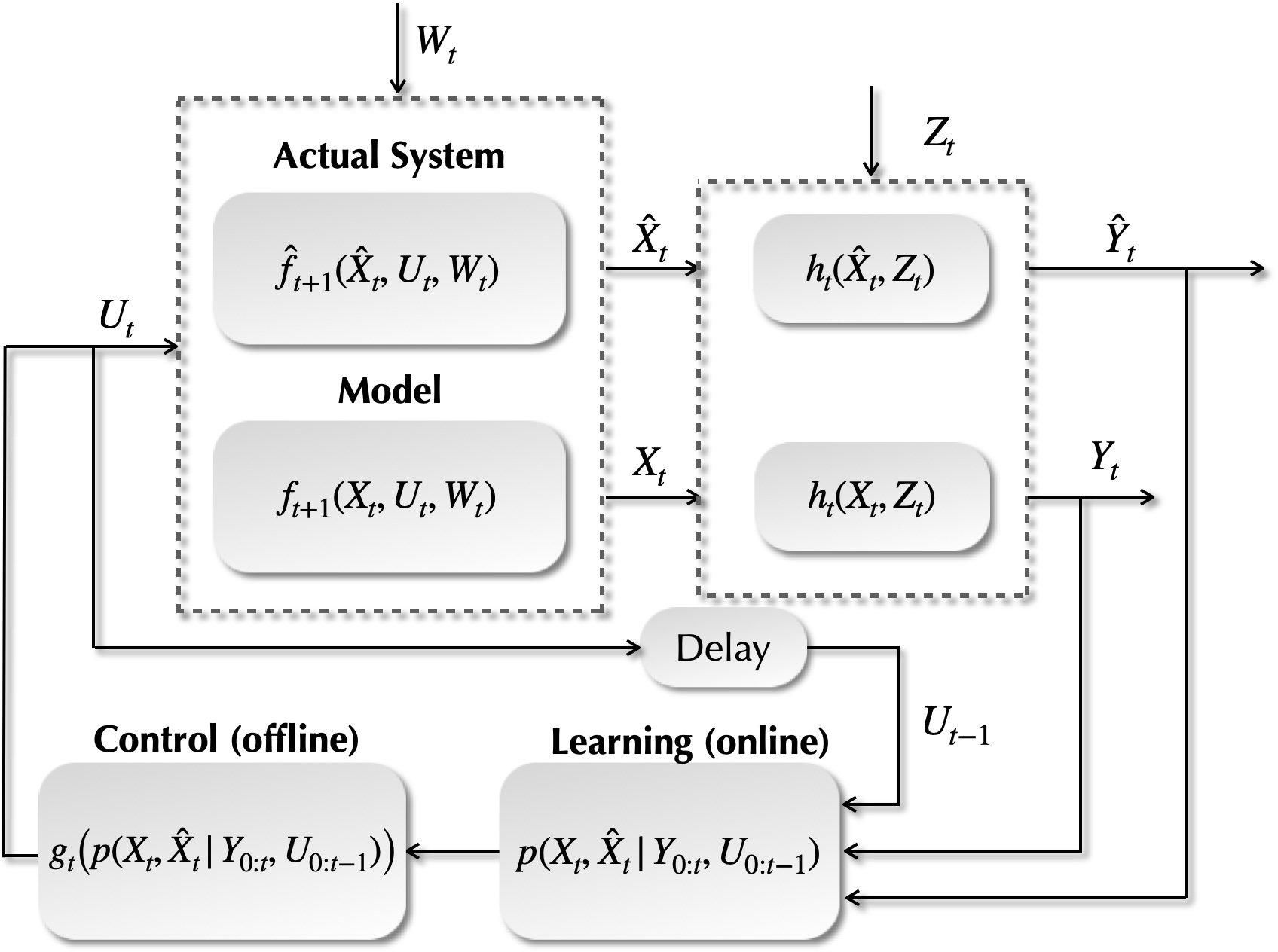} 
	\caption{Separation of learning and control.}%
	\label{fig:2}%
\end{figure}

Let ${U}_t$ be the control of the actual system at time $t$.  Starting at the initial state $X_0$, the evolution of the mathematical model of the system that we have available is described by the state equation
\begin{align}\label{eq:state}
	X_{t+1}=f_t\big(X_t,U_t,W_t\big), 
\end{align}
where  $t = 0,1,\ldots,T-1$, and $W_t$ is a random variable defined on the probability space $(\Omega, \mathscr{F}, \mathbb{P})$ that corresponds to the external, uncontrollable disturbance to the actual system, hence to the model too, and takes values in a measurable set $(\mathcal{W}, \mathscr{W})$, i.e., $W_t:(\Omega, \mathscr{F})\to(\mathcal{W}, \mathscr{W})$. 
Similarly, starting at the initial state $X_0$, the evolution of the actual system  is described by the state equation
\begin{align}\label{eq:statereal}
	\hat{X}_{t+1}=\hat{f}_t\big(\hat{X}_t,U_t,W_t\big),
\end{align}
where $t = 0,1,\ldots,T-1$, while $\{W_t: t=0,\ldots,T-1\}$ is a sequence of independent random variables that are also independent of the initial state $X_0$.

At time $t = 0,1,\ldots,T-1$, we make an observation $Y_t$ of the model's output, which takes values in a measurable set $(\mathcal{Y}, \mathscr{Y})$, described by the observation equation 
\begin{align}\label{eq:observe}
	Y_t = h_t(X_t,Z_t),
\end{align}
where $Z_t$ is a random variable defined on the probability space $(\Omega, \mathscr{F}, \mathbb{P})$ that corresponds to the noise of the sensor and takes values in a measurable set $(\mathcal{Z}, \mathscr{Z})$, i.e., $Z_t:(\Omega, \mathscr{F})\to(\mathcal{Z}, \mathscr{Z})$. Note  $\{Z_t: ~t=0,\ldots,T-1 \}$ is a sequence of independent random variables that are also independent of $\{W_t: t=0,\ldots,T-1\}$, and the initial state $X_0$. Similarly, at time $t = 0,1,\ldots,T-1$, we make an observation $\hat{Y}_t$ of the actual system, which takes values in a measurable set $(\mathcal{Y}, \mathscr{Y})$, described by the observation equation 
\begin{align}\label{eq:observereal}
	\hat{Y}_t = h_t(\hat{X}_t, Z_t).
\end{align}

A control strategy $\textbf{g}=\{g_t;~ t=0,\ldots,T-1\}$, $\textbf{g}\in\mathcal{G}$, where $\mathcal{G}$ is the feasible set of strategies, yields a decision 
\begin{align}\label{eq:control}
	U_t =g_t(\hat{Y}_{0:t}, U_{0:t-1}),
\end{align}
where  the measurable function $g_t$ is the control law.

\textbf{Problem 1}~[Actual system]: \label{problem1}
The problem is to derive the optimal control strategy $\textbf{g}^*\in\mathcal{G}$ that minimizes the expected total cost of the actual system, 
\begin{align}\label{eq:cost}
	\hat{J}(\textbf{g})=\mathbb{E}^{\textbf{g}}\left[\sum_{t=0}^{T-1} c_t(\hat{X}_t, U_t)+c_T(\hat{X}_T)\right],
\end{align}
where the expectation is with respect to the joint probability distribution of the random variables $\hat{X}_t$ and  $U_t$ designated by the choice of $\textbf{g}\in\mathcal{G}$, $c_t(\cdot, \cdot): \mathcal{X}_t\times \mathcal{U}_t \to\mathbb{R}$ 
is the measurable cost function of the actual CPS at $t$, and $c_T(\cdot):(\mathcal{X}_T, \mathscr{X}_T) \to\mathbb{R}$ is the measurable cost function at $T$.

The statistics of the primitive random variables $X_0$,  $\{W_t: t=0,\ldots,T-1\}$, $\{Z_t :~ t=0,\ldots,T-1\}$, the observation equations $\{h_t: ~ t=0,\ldots,T-1\}$, and the cost functions $\{c_t: t=0,\ldots,T\}$ are all known. However, the state equations $\{\hat{f}_t: t=0,\ldots,T-1\}$ are not known.

%%%%%%%%%%%%%%%%%%%%%%%%%%%%%%%%%%%%%%%%%%%%%%%%%%%%%%
%%%%%%%%%%%%%%%%%%%%%%%%%%%%%%%%%%%%%%%%%%%%%%%%%%%%%%
%SECTION III: Separation of Learning and Control from a Centralized Perspective
%%%%%%%%%%%%%%%%%%%%%%%%%%%%%%%%%%%%%%%%%%%%%%%%%%%%%%
%%%%%%%%%%%%%%%%%%%%%%%%%%%%%%%%%%%%%%%%%%%%%%%%%%%%%%

\section{Separation of Learning and Control } \label{sec:3}
In our exposition, we address Problem $1$ from the point of view of a central controller who seeks to derive the optimal strategy $\textbf{g}\in\mathcal{G}$ of the actual system. We consider densities for all probability distributions to simplify notation. Let $\textbf{g}=\{g_t;~ t=0,\ldots,T-1\}$, $\textbf{g}\in\mathcal{G},$ be a control strategy which yields a decision $U_t =g_t(Y_{0:t}, U_{0:t-1})$. 
First, we institute an appropriate information state that can be used to formulate a classical dynamic programming decomposition. To establish this information state, we use the model of the system in conjunction with the actual system (Fig. \ref{fig:2}). 

The information state, denoted by $\Pi_{t}(Y_{0:t}, U_{0:t-1})(X_{t},\hat{X}_t)$ and defined formally next,  is the probability density function $p(X_{t}, \hat{X}_t ~|~Y_{0:t}, U_{0:t-1})$. In what follows, to simplify notation, the information state $\Pi_{t}(Y_{0:t}, U_{0:t-1})$ $(X_{t},\hat{X}_{t})$ at $t$ is denoted simply by $\Pi_t$. We use its arguments only if it is required in our exposition.

%%%%%%%%%%%%
%%%%Definition
%%%%%%%%%%%%
\begin{definition} \label{def:infoteam}
	An information state, $\Pi_t$, for the system illustrated in Fig. \ref{fig:2} described by the state equations \eqref{eq:state} and \eqref{eq:statereal}, is (a) a function of  $(Y_{0:t}, U_{0:t-1})$, while (b) $\Pi_{t+1}$ is determined from $\Pi_t$, $Y_{t+1}$, and $U_{t}$.
\end{definition}

Next, we provide some necessary results that aim at establishing the information state. The results of the following Lemmas \ref{lem:y_t} -- \ref{lem:x_t} are equivalent to the results of \cite[Lemmas 1-3]{Malikopoulos2022a} when the information structure  of the system is classical \cite{Schuppen2015,Yuksel2013} and the controller has perfect recall \cite{Kumar1986,bertsekas1995dynamic}.

%%%%%%%%%%%%
%%%%Lemma
%%%%%%%%%%%%
\begin{lemma} \label{lem:y_t}
	For any control strategy $\textbf{g}\in\mathcal{G}$ of the system, the conditional probability of the model's observation $Y_{t+1}$ at $t+1$ given the states of the model and the actual system $X_{t+1}$ and $\hat{X}_{t+1}$ at $t+1$, respectively,  the history of the model's observations $Y_{0:t}$, and the history of the  control actions $U_{0:t}$ is equal to the conditional probability of the model's observation $Y_{t+1}$ at $t+1$ given the states of the model $X_{t+1}$ at $t+1$, which  does not depend on the control strategy $\textbf{g}$. Equivalently, we have
	\begin{align}\label{eq:y_t}
		p^{\textbf{g}}(Y_{t+1}~|~X_{t+1}, \hat{X}_{t+1}, Y_{0:t}, U_{0:t})= p(Y_{t+1}~|~X_{t+1}),
	\end{align}
	for all $t=0,1,\ldots, T-1.$
\end{lemma}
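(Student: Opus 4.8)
The plan is to prove the identity \eqref{eq:y_t} by working with the underlying primitive random variables and exploiting the structural assumptions on the model: the observation equation \eqref{eq:observe} states that $Y_{t+1} = h_{t+1}(X_{t+1}, Z_{t+1})$, and $Z_{t+1}$ is independent of $\{W_s : s = 0,\ldots,T-1\}$, of $X_0$, and (since $\{Z_s\}$ is an i.i.d.\ sequence) of $Z_{0:t}$. First I would express all the conditioning variables $X_{t+1}, \hat{X}_{t+1}, Y_{0:t}, U_{0:t}$ as measurable functions of the primitive variables $X_0$, $W_{0:t}$, $Z_{0:t}$: unrolling the recursions \eqref{eq:state}, \eqref{eq:statereal}, the observation maps \eqref{eq:observe}, \eqref{eq:observereal}, and the control law \eqref{eq:control} shows by induction on $t$ that $(X_{t+1}, \hat{X}_{t+1}, Y_{0:t}, U_{0:t})$ is $\sigma(X_0, W_{0:t}, Z_{0:t})$-measurable (the strategy $\textbf{g}$ enters only through these maps). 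Hence the $\sigma$-algebra generated by the conditioning variables is contained in $\sigma(X_0, W_{0:t}, Z_{0:t})$.

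Next I would invoke the independence of $Z_{t+1}$ from $(X_0, W_{0:t}, Z_{0:t})$. Since $Y_{t+1}$ is a function of $X_{t+1}$ and $Z_{t+1}$ alone, and $X_{t+1}$ is already among the conditioning variables, conditioning $Y_{t+1}$ on $(X_{t+1}, \hat X_{t+1}, Y_{0:t}, U_{0:t})$ is the same as conditioning on $X_{t+1}$ together with an independent randomization $Z_{t+1}$; the extra conditioning variables carry no additional information about the pair $(X_{t+1}, Z_{t+1})$ beyond what $X_{t+1}$ itself provides. Concretely, for any bounded measurable test function $\phi$,
\begin{align}\label{eq:lem1proof}
	\mathbb{E}^{\textbf{g}}\big[\phi(Y_{t+1}) \mid X_{t+1}, \hat{X}_{t+1}, Y_{0:t}, U_{0:t}\big]
	= \mathbb{E}\big[\phi\big(h_{t+1}(x, Z_{t+1})\big)\big]\big|_{x = X_{t+1}}
	= \mathbb{E}\big[\phi(Y_{t+1}) \mid X_{t+1}\big],
\end{align}
where the first equality uses the independence just described (a standard ``freezing'' / substitution property of conditional expectation), and the second uses the same independence in the reverse direction together with \eqref{eq:observe}. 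Since the right-hand side of \eqref{eq:lem1proof} makes no reference to $\textbf{g}$, and in fact $h_{t+1}$ and the law of $Z_{t+1}$ are among the known primitives, the conditional law $p(Y_{t+1}\mid X_{t+1})$ is strategy-independent. Passing from the conditional-expectation statement to the density statement \eqref{eq:y_t} is routine under the blanket assumption that all relevant distributions admit densities.

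The main obstacle I anticipate is the careful bookkeeping in the induction that certifies $(X_{t+1}, \hat X_{t+1}, Y_{0:t}, U_{0:t})$ is measurable with respect to $\sigma(X_0, W_{0:t}, Z_{0:t})$ and, crucially, that this measurability does \emph{not} pull in $Z_{t+1}$ --- one must check that the control law at time $t$, $U_t = g_t(\hat Y_{0:t}, U_{0:t-1})$, depends only on observations up to time $t$ and hence only on $Z_{0:t}$ (this is where the ``classical information structure'' and ``perfect recall'' hypotheses cited before the lemma are used). Once that measurability is in place, the rest is a direct application of the substitution property of conditional expectations under independence, so I would present the induction explicitly but treat the final freezing step as standard.
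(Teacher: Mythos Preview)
Your argument is correct: expressing $Y_{t+1}=h_{t+1}(X_{t+1},Z_{t+1})$, verifying by induction that $(X_{t+1},\hat X_{t+1},Y_{0:t},U_{0:t})$ is $\sigma(X_0,W_{0:t},Z_{0:t})$-measurable, and then invoking the independence of $Z_{t+1}$ from $(X_0,W_{0:t},Z_{0:t})$ via the freezing lemma is exactly the standard route to \eqref{eq:y_t}. Note, however, that the paper does not actually prove Lemma~\ref{lem:y_t}; it states the result and refers the reader to \cite[Lemmas~1--3]{Malikopoulos2022a}, remarking that the present setting is the special case of a classical information structure with perfect recall. Your proposal thus supplies precisely the argument the paper omits, and it matches the canonical proof one would expect in the cited reference; there is no divergence in method to discuss.
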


%%%%%%%%%%%%
%%%%Lemma
%%%%%%%%%%%%
\begin{lemma} \label{lem:x_t1ut}
	For any control strategy $\textbf{g}\in\mathcal{G}$ of the system, the conditional probability of the joint distribution of the states of the model and the actual system $X_{t+1}$ and $\hat{X}_{t+1}$ at $t+1$, respectively, given the states of the model and the actual system $X_{t}$ and $\hat{X}_{t}$ at $t$, respectively, the history of the model's observations $Y_{0:t}$, and the history of the  control actions $U_{0:t}$ is equal to the conditional probability of the joint distribution of the states of the model and the actual system $X_{t+1}$ and $\hat{X}_{t+1}$ at $t+1$, respectively, given the states of the model and the actual system $X_{t}$ and $\hat{X}_{t}$, respectively, and the control action $U_t$ at $t$,  which  does not depend on the control strategy $\textbf{g}$. Equivalently, we have
	\begin{align}
		p^{\textbf{g}}(X_{t+1},\hat{X}_{t+1}~|~X_t, \hat{X}_{t},  Y_{0:t}, U_{0:t}) 
		= p(X_{t+1}, \hat{X}_{t+1}~|~X_t, \hat{X}_{t}, U_t), \label{eq:x_t1ut}
	\end{align}
	for all $t=0,1,\ldots, T-1.$
\end{lemma}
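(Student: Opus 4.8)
The plan is to use that the one-step evolution of the pair $(X_{t+1},\hat{X}_{t+1})$ is, by \eqref{eq:state} and \eqref{eq:statereal}, a \emph{deterministic} function of $(X_t,\hat{X}_t,U_t,W_t)$, and that the disturbance $W_t$ is independent of everything in the conditioning. First I would show that, for a fixed strategy $\textbf{g}\in\mathcal{G}$, each of the conditioning variables $X_t$, $\hat{X}_t$, $Y_{0:t}$, $U_{0:t}$ is a measurable function of the primitive variables $X_0$, $W_{0:t-1}$, $Z_{0:t}$ alone. This is an induction on $t$ that unrolls the closed loop: $Y_0=h_0(X_0,Z_0)$ and $U_0=g_0(Y_0)$; and for $s\le t$, $X_s=f_{s-1}(X_{s-1},U_{s-1},W_{s-1})$, $\hat{X}_s=\hat{f}_{s-1}(\hat{X}_{s-1},U_{s-1},W_{s-1})$, $Y_s=h_s(X_s,Z_s)$, and $U_s=g_s(Y_{0:s},U_{0:s-1})$ by \eqref{eq:control}. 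The crucial point is that the control law at each step reads only \emph{past} observations and controls, so substituting recursively expresses $(X_t,\hat{X}_t,Y_{0:t},U_{0:t})$ in terms of $X_0,W_{0:t-1},Z_{0:t}$ (and the fixed deterministic maps $g_s$); hence $\sigma(X_t,\hat{X}_t,Y_{0:t},U_{0:t})\subseteq\sigma(X_0,W_{0:t-1},Z_{0:t})$.

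Next I would invoke the standing modeling assumptions: $\{W_t\}$ is an independent sequence, independent of $X_0$, and $\{Z_t\}$ is independent of $\{W_t\}$ and of $X_0$. Consequently $W_t$ is independent of $(X_0,W_{0:t-1},Z_{0:t})$, and therefore of $(X_t,\hat{X}_t,Y_{0:t},U_{0:t})$. It follows that the conditional law of $W_t$ given $(X_t,\hat{X}_t,Y_{0:t},U_{0:t})$ equals its unconditional law $p(W_t)$; in particular it carries no dependence on $\textbf{g}$. The same independence argument applied to the smaller conditioning set shows the conditional law of $W_t$ given only $(X_t,\hat{X}_t,U_t)$ is also $p(W_t)$.

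To finish, I would note that, given $(X_t,\hat{X}_t,Y_{0:t},U_{0:t})=(x_t,\hat{x}_t,y_{0:t},u_{0:t})$, the conditional density of $(X_{t+1},\hat{X}_{t+1})$ is precisely the pushforward of the conditional law of $W_t$ under the deterministic map $w\mapsto\big(f_t(x_t,u_t,w),\,\hat{f}_t(\hat{x}_t,u_t,w)\big)$; substituting $p(W_t)$ for that conditional law (previous step) yields a density that depends only on $(x_t,\hat{x}_t,u_t)$. Running the identical computation with conditioning on $(X_t,\hat{X}_t,U_t)$ only shows that this density equals $p(x_{t+1},\hat{x}_{t+1}\mid x_t,\hat{x}_t,u_t)$, giving \eqref{eq:x_t1ut} and the asserted $\textbf{g}$-independence. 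This mirrors the proof of Lemma \ref{lem:y_t}, with the sensor-noise independence there replaced by the disturbance independence here.

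I expect the main obstacle to be the bookkeeping in the inductive step that pins the conditioning variables down as functions of $X_0,W_{0:t-1},Z_{0:t}$: one must be careful that the perfect-recall, classical information structure forces $U_{0:t}$ (and hence $U_t$) not to depend on $W_t$ or on $Z_{t+1}$, since that is exactly what both decouples $W_t$ from the conditioning and eliminates the strategy dependence. Everything after that — the marginalization over $W_t$ and the substitution of its unconditional law — is routine. If the densities are degenerate (e.g.\ deterministic transitions), the same steps can be carried out at the level of conditional expectations of bounded measurable test functions, but the paper's standing assumption that all distributions admit densities lets us argue directly as above.
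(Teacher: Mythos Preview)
Your argument is correct. The induction showing that $(X_t,\hat X_t,Y_{0:t},U_{0:t})$ are measurable functions of the primitives $(X_0,W_{0:t-1},Z_{0:t})$, together with the independence of $W_t$ from those primitives, is exactly the right mechanism; the pushforward of the unconditional law of $W_t$ through $w\mapsto\big(f_t(x_t,u_t,w),\hat f_t(\hat x_t,u_t,w)\big)$ then delivers \eqref{eq:x_t1ut} and its strategy-independence cleanly.

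There is, however, little to compare against in the paper itself: the paper does not supply a self-contained proof of this lemma (nor of Lemma~\ref{lem:y_t}, which you invoke as a template), but instead remarks, just before stating the three lemmas, that they are equivalent to \cite[Lemmas~1--3]{Malikopoulos2022a} under the present classical, perfect-recall information structure. Your write-up is therefore more detailed than anything appearing in the paper, and it follows the standard Kumar--Varaiya-style derivation one would expect the cited reference to contain. One cosmetic point worth fixing: equation \eqref{eq:control} actually reads $U_t=g_t(\hat Y_{0:t},U_{0:t-1})$, whereas the opening paragraph of Section~\ref{sec:3} switches to $U_t=g_t(Y_{0:t},U_{0:t-1})$; your induction goes through verbatim in either case, since $\hat Y_s=h_s(\hat X_s,Z_s)$ is likewise a function of $(X_0,W_{0:s-1},Z_{0:s})$, but you should not cite \eqref{eq:control} for the $Y$-based form.
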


%%%%%%%%%%%%
%%%%Lemma
%%%%%%%%%%%%
\begin{lemma} \label{lem:x_t}
	For any control strategy $\textbf{g}\in\mathcal{G}$ of the system, the conditional probability of the joint distribution of the states of the model and the actual system $X_{t}$ and $\hat{X}_{t}$ at $t$, respectively, given the history of the model's observations $Y_{0:t}$, and the history of the  control actions $U_{0:t-1}$ does not depend on the control strategy $\textbf{g}$. Equivalently, we have
	\begin{align}\label{eq:x_t}
		p^{\textbf{g}}(X_{t}, \hat{X}_{t}~|~Y_{0:t}, U_{0:t-1}) = p(X_{t},\hat{X}_{t}~|~Y_{0:t}, U_{0:t-1}),
	\end{align}
	for all $t=0,1,\ldots, T-1.$ 
\end{lemma}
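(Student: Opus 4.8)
The plan is to argue by induction on $t$, leaning on Lemmas \ref{lem:y_t}--\ref{lem:x_t1ut} and on the fact that, under a fixed strategy $\textbf{g}$, the decision $U_t = g_t(Y_{0:t}, U_{0:t-1})$ is a deterministic (measurable) function of the conditioning history. For the base case $t=0$ the history $U_{0:-1}$ is empty, so the claim reduces to showing $p^{\textbf{g}}(X_0,\hat{X}_0\mid Y_0)$ does not depend on $\textbf{g}$; since $Y_0=h_0(X_0,Z_0)$ and the joint law of $(X_0,\hat{X}_0,Z_0)$ is fixed by the known primitive statistics with no control entering, Bayes' rule expresses this posterior in terms of the prior on $(X_0,\hat{X}_0)$ and the kernel $p(Y_0\mid X_0)$ only, neither of which involves $\textbf{g}$.

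For the inductive step, assuming $p^{\textbf{g}}(X_t,\hat{X}_t\mid Y_{0:t},U_{0:t-1})$ is strategy-independent, I would expand the posterior at $t+1$ via Bayes' rule,
\begin{align*}
 p^{\textbf{g}}(X_{t+1},\hat{X}_{t+1}\mid Y_{0:t+1},U_{0:t})\ \propto\ p^{\textbf{g}}(Y_{t+1}\mid X_{t+1},\hat{X}_{t+1},Y_{0:t},U_{0:t})\; p^{\textbf{g}}(X_{t+1},\hat{X}_{t+1}\mid Y_{0:t},U_{0:t}),
\end{align*}
the normalizer being the integral of the right-hand side over $(X_{t+1},\hat{X}_{t+1})$. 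Lemma \ref{lem:y_t} replaces the first factor by $p(Y_{t+1}\mid X_{t+1})$. For the second factor I would marginalize over $(X_t,\hat{X}_t)$ and use Lemma \ref{lem:x_t1ut} on the transition term to obtain
\begin{align*}
 p^{\textbf{g}}(X_{t+1},\hat{X}_{t+1}\mid Y_{0:t},U_{0:t}) = \int p(X_{t+1},\hat{X}_{t+1}\mid X_t,\hat{X}_t,U_t)\; p^{\textbf{g}}(X_t,\hat{X}_t\mid Y_{0:t},U_{0:t})\,\mathrm{d}X_t\,\mathrm{d}\hat{X}_t .
\end{align*}
Finally, since $U_t=g_t(Y_{0:t},U_{0:t-1})$ is determined by $(Y_{0:t},U_{0:t-1})$, conditioning additionally on $U_t$ is vacuous, so $p^{\textbf{g}}(X_t,\hat{X}_t\mid Y_{0:t},U_{0:t}) = p^{\textbf{g}}(X_t,\hat{X}_t\mid Y_{0:t},U_{0:t-1})$, which the induction hypothesis makes strategy-independent. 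Every surviving factor, hence the normalized posterior, is then independent of $\textbf{g}$, closing the induction.

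The delicate point I expect to fight with is rigor in the conditioning manipulations: justifying that we may work with densities throughout (positivity of the normalizers, existence and essential uniqueness of the conditional densities) and, in particular, making precise the step that conditioning on $U_t$ adds nothing when $U_t$ is a deterministic image of the history. A clean way to handle this is to fix a realization $(y_{0:t},u_{0:t-1})$, note that $u_t=g_t(y_{0:t},u_{0:t-1})$ is then pinned down, observe that the event $\{U_t=u_t\}$ is implied by $\{Y_{0:t}=y_{0:t},U_{0:t-1}=u_{0:t-1}\}$, and verify that each identity above holds pointwise in that realization. This is exactly where the classical information structure and perfect-recall assumptions — under which Lemmas \ref{lem:y_t}--\ref{lem:x_t1ut} are stated — are used.
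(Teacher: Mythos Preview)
Your proposal is correct and is exactly the standard argument one expects here: induction on $t$, Bayes' rule for the update, Lemma~\ref{lem:y_t} to strip the observation kernel down to $p(Y_{t+1}\mid X_{t+1})$, Lemma~\ref{lem:x_t1ut} to make the one-step transition strategy-free, and the perfect-recall observation that $U_t$ is $\sigma(Y_{0:t},U_{0:t-1})$-measurable so that adding it to the conditioning set is vacuous. Note, however, that the paper itself does not supply a proof of Lemma~\ref{lem:x_t}; it only states the result and refers the reader to \cite[Lemmas~1--3]{Malikopoulos2022a} for the classical information structure with perfect recall. Your argument is precisely the kind of proof that reference would contain, and in fact your ``conditioning on $U_t$ adds nothing'' step is what the paper later packages as Remark~\ref{cor:lemU} and then feeds, together with Lemmas~\ref{lem:y_t}--\ref{lem:x_t1ut}, into Theorem~\ref{theo:y_t}. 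So there is no discrepancy in approach to report; you have simply filled in what the paper leaves to a citation.
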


%%%%%%%%%%%%
%%%%Remark
%%%%%%%%%%%%
\begin{remark}\label{cor:lemU}
	As a consequence of Lemma \ref{lem:x_t}, and since both $X_{t}$ and $\hat{X}_{t}$ do not depend on $U_t$, for any control strategy $\textbf{g}\in\mathcal{G}$ of the system, the conditional probability of the joint distribution of the states of the model and the actual system $X_{t}$ and $\hat{X}_{t}$ at $t$, respectively, given the history of the model's observations $Y_{0:t}$, and the history of the  control actions $U_{0:t}$ up until $t$ is equal to the conditional probability of the joint distribution of the states of the model and the actual system $X_{t}$ and $\hat{X}_{t}$ at $t$, respectively, given the history of the model's observations $Y_{0:t}$, and the history of the  control actions $U_{0:t-1}$ up until $t-1$	which does not depend on the control strategy $\textbf{g}$. Equivalently, we have
	\begin{align}\label{eq:lemU}
		p^{\textbf{g}}(X_{t}, \hat{X}_t~|~Y_{0:t}, U_{0:t}) = p(X_{t}, \hat{X}_t~|~Y_{0:t}, U_{0:t-1}).
	\end{align}
\end{remark}

The next result shows that such information state does not depend on the control strategy of the model.

%%%%%%%%%%%%
%%%%Theorem
%%%%%%%%%%%%
\begin{theorem}\label{theo:y_t}
	For any control strategy $\textbf{g}\in\mathcal{G}$ derived offline using the model of the system, the information state $\Pi_{t}(Y_{0:t}, U_{0:t-1})(X_{t},\hat{X}_t)$ does not depend on the control strategy $\textbf{g}$.
	Moreover, there is a function $\phi_t$, which does not depend on the control strategy $\textbf{g}$, such that
	\begin{align}\label{eq:xt1}
		\Pi_{t+1}(Y_{0:t+1}, U_{0:t})(X_{t+1},\hat{X}_{t+1}) 
		= \phi_t\big[ \Pi_{t}(Y_{0:t}, U_{0:t-1})(X_{t},\hat{X}_t), Y_{t+1}, U_t \big],
	\end{align}
	for all $t=0,1,\ldots, T-1.$
\end{theorem}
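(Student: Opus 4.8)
The plan is to establish the two claims separately. The first claim --- that $\Pi_t$ carries no dependence on $\textbf{g}$ --- is essentially already contained in Lemma~\ref{lem:x_t}: by definition $\Pi_t = p^{\textbf{g}}(X_t,\hat{X}_t\mid Y_{0:t},U_{0:t-1})$, and Lemma~\ref{lem:x_t}, applied at each $t = 0,\ldots,T-1$ (and at $t+1$ below), asserts exactly that this density equals $p(X_t,\hat{X}_t\mid Y_{0:t},U_{0:t-1})$. So I would simply invoke that lemma and henceforth drop the superscript $\textbf{g}$ from $\Pi_t$.

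For the second claim I would derive the standard nonlinear-filtering recursion. Starting from $\Pi_{t+1} = p(X_{t+1},\hat{X}_{t+1}\mid Y_{0:t+1},U_{0:t})$ and treating $Y_{t+1}$ as the new observation conditioned on $(Y_{0:t},U_{0:t})$, Bayes' rule gives
\begin{align*}
\Pi_{t+1} = \frac{p(Y_{t+1}\mid X_{t+1},\hat{X}_{t+1},Y_{0:t},U_{0:t})\; p(X_{t+1},\hat{X}_{t+1}\mid Y_{0:t},U_{0:t})}{p(Y_{t+1}\mid Y_{0:t},U_{0:t})}.
\end{align*}
By Lemma~\ref{lem:y_t} the first factor in the numerator is the observation kernel $p(Y_{t+1}\mid X_{t+1})$. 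For the second factor I would marginalize over $(X_t,\hat{X}_t)$,
\begin{align*}
p(X_{t+1},\hat{X}_{t+1}\mid Y_{0:t},U_{0:t}) = \int p(X_{t+1},\hat{X}_{t+1}\mid X_t,\hat{X}_t,Y_{0:t},U_{0:t})\; p(X_t,\hat{X}_t\mid Y_{0:t},U_{0:t})\,\mathrm{d}(X_t,\hat{X}_t),
\end{align*}
then apply Lemma~\ref{lem:x_t1ut} to replace the transition factor by $p(X_{t+1},\hat{X}_{t+1}\mid X_t,\hat{X}_t,U_t)$ and Remark~\ref{cor:lemU} to identify $p(X_t,\hat{X}_t\mid Y_{0:t},U_{0:t}) = \Pi_t$. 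The denominator is merely the normalizing constant, i.e., the integral of the numerator over $(X_{t+1},\hat{X}_{t+1})$. The resulting expression writes $\Pi_{t+1}$ as a ratio of integrals of $\Pi_t$ against the fixed observation and joint-transition kernels, evaluated at $Y_{t+1}$ and $U_t$; defining $\phi_t$ to be this map yields \eqref{eq:xt1}. Since those two kernels are themselves $\textbf{g}$-free by Lemmas~\ref{lem:y_t} and~\ref{lem:x_t1ut}, so is $\phi_t$; and since $\Pi_{t+1}$ is built from $\Pi_t$ (a function of $(Y_{0:t},U_{0:t-1})$), $Y_{t+1}$, and $U_t$, it is a function of $(Y_{0:t+1},U_{0:t})$, so $\Pi_t$ satisfies both parts of Definition~\ref{def:infoteam}.

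The computation itself is routine; the points deserving care are (i) the replacement of conditioning on $U_{0:t}$ by conditioning on $U_{0:t-1}$ in the term $p(X_t,\hat{X}_t\mid Y_{0:t},U_{0:t})$, which is exactly Remark~\ref{cor:lemU} and is the precise place where the separation structure is invoked; and (ii) the measure-theoretic well-posedness of the conditional densities and, in particular, the finiteness and strict positivity of the normalizing constant $p(Y_{t+1}\mid Y_{0:t},U_{0:t})$, which rely on the standing regularity assumptions on the primitive distributions, observation maps, and kernels. I would also remark that no separate strategy-independence argument is needed for the denominator, since it is obtained by integrating the (already $\textbf{g}$-free) numerator.
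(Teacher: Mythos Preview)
Your proposal is correct and follows the same approach as the paper. The paper does not give a detailed proof but simply states that the result follows from Lemmas~\ref{lem:y_t}--\ref{lem:x_t1ut} and Remark~\ref{cor:lemU}; your Bayes-rule/nonlinear-filtering derivation is precisely the standard way of unpacking that sentence, and your explicit invocation of Lemma~\ref{lem:x_t} for the first claim is appropriate since Remark~\ref{cor:lemU} already rests on it.
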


The result of Theorem \ref{theo:y_t} follows from Lemmas \ref{lem:y_t} -- \ref{lem:x_t1ut} and Remark \ref{cor:lemU}. Note that the system's information state $\Pi_{t+1}(Y_{0:t+1}, U_{0:t})(X_{t+1},$ $ \hat{X}_{t+1})$ is the entire probability density function and not just its value at any particular realization of  $X_{t+1}$ and $ \hat{X}_{t+1}$. This is because to compute $\Pi_{t+1}(Y_{0:t+1}, U_{0:t})(X_{t+1},\hat{X}_{t+1})$ for any particular realization of  $X_{t+1}$ and $\hat{X}_{t+1}$, we need the probability density functions $p(~\cdot, \cdot  ~|~ Y_{0:t}, U_{0:t})$ and $p(~\cdot, \cdot  ~|~ Y_{0:t}, U_{0:t-1})$. This implies that the information state takes values in the space of these probability densities, which is an infinite-dimensional space.

%%%%%%%%%%%%
%%%%Definition
%%%%%%%%%%%%

\begin{definition}\label{def:septeam}
	A control strategy $\textbf{g}=\{g_t;~ t=0,\ldots,T-1\}$, of the system is said to be \textit{separated} if $g_t$ depends on $Y_{0:t+1}$ and  $U_{0:t}$ only through the information state, i.e., $U_t  = g_t\big(\Pi_{t}(Y_{0:t}, U_{0:t-1})(X_{t},\hat{X}_{t})\big)$. Let $\mathcal{G}^s\subseteq\mathcal{G}$ denote the set of all separated control strategies.
\end{definition}

To derive the optimal control strategy of the actual system in Problem 1, we formulate the following optimization problem.

%%%%%%%%%%%%
%%%%Problem
%%%%%%%%%%%%
\textbf{Problem 2} \label{problem2}
Using the model of the actual system, we seek to derive offline the optimal control strategy $\textbf{g}^*\in\mathcal{G}^s$ that minimizes the following expected total cost 
\begin{align}			
	J(\textbf{g};\hat{x}_{0:T})
	= \mathbb{E}^{\textbf{g}}\Bigg[\sum_{t=0}^{T-1}\Big[c_t(X_t, U_t)
	+ \beta \cdot|X_{t+1}- \hat{X}_{t+1}|^2\Big]
	+c_T(X_T) \Bigg], \label{eq:costreal}	
\end{align}	
where $X_{t+1}=f_t\big(X_t,U_t,W_t\big)$, $\hat{X}_{t+1}=\hat{f}_t\big(\hat{X}_t,$ $U_t,W_t\big)$, and $\beta$ is a factor to adjust the units and size of the norm accordingly as designated by the cost function $c_t(\cdot, \cdot)$. The norm penalizes any discrepancy between the realizations of the state of the system's model and the state of the actual system. The expectation in \eqref{eq:costreal}	is with respect to the joint probability distribution of the random variables $X_t$,  $U_t$, $\hat{X}_t$, $t=0,1,\ldots, T,$ (designated by the choice of $\textbf{g}\in\mathcal{G}^s$) and $W_t$.
Since solving \eqref{eq:costreal}	is an offline process, the realizations $\hat{x}_{0:T}$ of the state $\hat{X}_{0:T}$ of the actual system  are not known, and thus $\textbf{g}^*$ is parameterized with respect to $\hat{x}_{0:T}$. The statistics of the primitive random variables $X_0$,  $\{W_t: t=0,\ldots,T-1\}$, $\{Z_t: ~ t=0,\ldots,T-1\}$, the state equations $\{f_t: t=0,\ldots,T-1\}$, the observation equations $\{h_t: ~ t=0,\ldots,T-1\}$, and the cost functions $\{c_t: t=0,\ldots,T\}$ are all known.

Next, we use the information state $\Pi_{t}(Y_{0:t}, U_{0:t-1})$ $(X_{t},\hat{X}_{t})$ to  derive offline the optimal separated control strategy in Problem 2. In our exposition, we define recursive functions, and show that a separated control strategy, namely, a control strategy $\textbf{g}=\{g_t;~ t=0,\ldots,T-1\}$ where $g_t$ depends on $Y_{0:t+1}$ and  $U_{0:t}$ only through the information state, i.e., $U_t  = g_t\big(\Pi_{t}(Y_{0:t}, U_{0:t-1})(X_{t},\hat{X}_{t})\big)$, of the  system's model is optimal. 
In addition, we obtain a classical dynamic programming decomposition. 

%%%%%%%%%%%%
%%%%Theorem
%%%%%%%%%%%%
\begin{theorem} \label{theo:dp}
	Let $V_t\big(Y_{0:t}, U_{0:t-1})(X_{t},\hat{X}_{t});~ \hat{x}_{t}\big)$ be functions defined recursively for all $\textbf{g}\in\mathcal{G}^s$ by
	\begin{align}
		&V_T\big(\Pi_{T}(Y_{0:T}, U_{0:T-1})(X_{T},\hat{X}_{T})\big) \coloneqq \mathbb{E}^{\textbf{g}}\Big[c_T(X_T)~|~\Pi_{T}=\pi_T \Big], \\
		&V_t\big(\Pi_{t}(Y_{0:t}, U_{0:t-1})(X_{t},\hat{X}_{t});~ \hat{x}_{t}\big)\coloneqq \inf_{u_t\in\mathcal{U}_t }\mathbb{E}^{\textbf{g}}\Big[c_t(X_t, U_t) 
		+ \beta ~|X_{t+1}  - \hat{X}_{t+1}|^2\nonumber\\
		&+V_{t+1}\big(\phi_t\big[ \Pi_{t}(Y_{0:t}, U_{0:t-1})(X_{t},\hat{X}_{t}), Y_{t+1}, U_t\big];~ \hat{x}_{t+1}\big)~|~\Pi_{t}=\pi_t, 	U_t=u_t  \Big], \label{theo2:1b}		
	\end{align}
	where $c_T(X_T)$ is the cost function at $T$;  $\beta$ is a factor to adjust the units and size of the norm as designated by the cost function $c_t(\cdot, \cdot)$; and $\pi_T$, $\pi_t$, and $u^{1:K}_t$ are the realizations of $\Pi_{T}$, $\Pi_{t}$, and $U^{1:K}_t$, respectively.
	Then, (a) for any control strategy $\textbf{g}\in\mathcal{G}^s$,
	\begin{align}			
		&V_t\big(\Pi_{t}(Y_{0:T}, U_{0:T-1})(X_{t},\hat{X}_{t});~ \hat{x}_{t}\big) \le J_t(\textbf{g};\hat{x}_{t:T}) \coloneqq \mathbb{E}^{\textbf{g}}\Bigg[\sum_{l=t}^{T-1}\Big[c_l(X_l,U_l)\nonumber\\
		&+ \beta \cdot|X_{l+1} - \hat{X}_{l+1}|^2 \Big]+ c_T(X_T) ~|~Y_{0:T}, U_{0:T-1}  \Bigg],
		\label{theo2:1c}	
	\end{align}	
	where $J_t(\textbf{g};\hat{x}_{t:T})$ is the cost-to-go function of the system's model, parameterized by the realizations of the  state $\hat{X}_{t}$ of the actual system, at time $t$ corresponding to the control strategy $\textbf{g}$; and
	(b) $\textbf{g}\in\mathcal{G}^s$ is optimal and 
	\begin{align}			
		V_t\big(\Pi_{t}(Y_{0:T}, U_{0:T-1})(X_{t},\hat{X}_{t});~ \hat{x}_{t}\big)=J_t(\textbf{g};~\hat{x}_{t:T}), \label{theo3:1b}
	\end{align}	
	with probability $1$.
\end{theorem}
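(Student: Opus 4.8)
The plan is to establish Theorem~\ref{theo:dp} by backward induction on $t$, following the standard dynamic programming verification argument, but carried out on the enlarged state space carrying both $X_t$ and $\hat{X}_t$ and parameterized by the realization $\hat{x}_{0:T}$. The crucial enabling fact, which I would invoke repeatedly, is Theorem~\ref{theo:y_t}: the information state $\Pi_t$ is a legitimate state in the sense of Definition~\ref{def:infoteam}, it does not depend on $\textbf{g}$, and it evolves via the strategy-independent recursion $\Pi_{t+1} = \phi_t[\Pi_t, Y_{t+1}, U_t]$. This is what makes the conditional expectations in \eqref{theo2:1b} well-defined as functions of $(\pi_t, u_t)$ alone and what lets the tower property close the induction.

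First I would prove part~(a), the inequality \eqref{theo2:1c}, by downward induction. The base case $t=T$ is immediate from the definition of $V_T$ and of $J_T(\textbf{g};\hat{x}_{T}) = \mathbb{E}^{\textbf{g}}[c_T(X_T)\mid Y_{0:T},U_{0:T-1}]$, using Lemma~\ref{lem:x_t} and Remark~\ref{cor:lemU} to identify conditioning on $(Y_{0:T},U_{0:T-1})$ with conditioning on $\Pi_T$. For the inductive step, fix $\textbf{g}\in\mathcal{G}^s$, assume \eqref{theo2:1c} holds at $t+1$, and expand $J_t(\textbf{g};\hat{x}_{t:T})$ by pulling the $t$-th term out of the sum and applying the tower property of conditional expectation, conditioning first on $(Y_{0:t+1},U_{0:t})$ and then on $(Y_{0:t},U_{0:t-1})$. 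The inner conditional expectation is bounded below by $V_{t+1}(\Pi_{t+1};\hat{x}_{t+1})$ by the inductive hypothesis, and $\Pi_{t+1}=\phi_t[\Pi_t,Y_{t+1},U_t]$ by Theorem~\ref{theo:y_t}. Since $\textbf{g}$ is separated, $U_t = g_t(\Pi_t)$ is a measurable function of $\Pi_t$, so the resulting conditional expectation over $Y_{t+1}$ is exactly the bracketed expression in \eqref{theo2:1b} evaluated at $u_t = g_t(\pi_t)$; this is $\ge \inf_{u_t\in\mathcal{U}_t}(\cdots) = V_t(\Pi_t;\hat{x}_t)$, giving the claim. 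Here one must be careful that Lemmas~\ref{lem:y_t}--\ref{lem:x_t1ut} are what guarantee that the one-step transition of $(X_{t+1},\hat{X}_{t+1})$ and the observation $Y_{t+1}$, conditioned on the past, depend only on $(\Pi_t, U_t)$, so the inner expectation genuinely reduces to a function of $(\pi_t, u_t)$.

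For part~(b), I would exhibit a separated strategy achieving equality. Define $g_t^\star$ by selecting, for each realization $\pi_t$, a minimizer (or near-minimizer, then pass to the limit) $u_t^\star = g_t^\star(\pi_t)$ of the right-hand side of \eqref{theo2:1b}; measurability of this selection is the one technical point requiring a measurable-selection theorem, which I would cite rather than prove. Running the same tower-property expansion as in part~(a) but now with $\textbf{g}^\star$ turns every inequality into an equality, yielding $V_t(\Pi_t;\hat{x}_t) = J_t(\textbf{g}^\star;\hat{x}_{t:T})$ with probability $1$, and combined with part~(a), $\textbf{g}^\star$ minimizes $J_t(\cdot;\hat{x}_{t:T})$ pointwise for every $t$ and in particular $J_0 = J(\cdot;\hat{x}_{0:T})$ from \eqref{eq:costreal}; taking expectations over $(Y_0,U_{-1})$ — i.e.\ the prior — shows optimality for Problem~2.

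The main obstacle I anticipate is not the algebra of the induction, which is routine once Theorem~\ref{theo:y_t} is in hand, but two measure-theoretic points that the paper's informal density-based exposition glosses over: (i) the measurable-selection issue for $g_t^\star$, since the minimization is over $u_t\in\mathcal{U}_t$ for each value of the infinite-dimensional information state $\pi_t$ and one needs the optimal value function $V_t$ to be measurable in $\pi_t$ and the $\inf$ to be attained (or to argue via $\varepsilon$-optimal selections and a limiting argument); and (ii) justifying the interchange of conditional expectation and the recursion $\Pi_{t+1}=\phi_t[\Pi_t,Y_{t+1},U_t]$ rigorously, i.e.\ that conditioning on $\Pi_t$ is informationally equivalent to conditioning on $(Y_{0:t},U_{0:t-1})$ for the quantities that appear — this is where Lemma~\ref{lem:x_t} and Remark~\ref{cor:lemU} do the work, but the paper's ``we consider densities to simplify notation'' must be leaned on to avoid a more delicate argument. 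I would handle both by stating the needed regularity as standing assumptions (or citing \cite{bertsekas1995dynamic,Kumar1986}) and then proceeding with the density computations.
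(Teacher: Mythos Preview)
Your backward-induction verification argument is correct and is exactly the standard route for results of this type. However, you should be aware that the paper itself does not supply a proof of Theorem~\ref{theo:dp}: immediately after the statement it simply remarks that the result is equivalent to \cite[Theorem~2]{Malikopoulos2022a} in the case of a classical information structure with perfect recall, and moves on. So there is no in-paper argument to compare against; your proposal effectively reconstructs what that cited reference contains.

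The two technical points you flag---measurable selection for $g_t^\star$ and the equivalence of conditioning on $\Pi_t$ versus on $(Y_{0:t},U_{0:t-1})$---are real, and the paper handles them the same way you suggest: by working with densities throughout and by implicitly relying on standard references such as \cite{Kumar1986,bertsekas1995dynamic}. Nothing further is needed for the purposes of this paper.
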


The result of Theorem \ref{theo:y_t} is equivalent to the result of \cite[Theorem 2]{Malikopoulos2022a} when the system's information structure  is classical \cite{Schuppen2015,Yuksel2013} and the controller has perfect recall \cite{Kumar1986,bertsekas1995dynamic}. 

The optimal strategy derived by the system's model is parameterized with respect to the potential realizations $\hat{x}_{0:T}$ of the state $\hat{X}_{t}$  of the actual system. Then, we use this strategy to operate the actual system in parallel with the system's model (Fig. \ref{fig:2}) and we collect data from both. Using these data, we learn the information state $\Pi_{t}(Y_{0:T}, U_{0:T-1})(X_{t+1},\hat{X}_{t+1})$  online.

%%%%%%%%%%%%
%%%%Proposition
%%%%%%%%%%%%
\begin{proposition} \label{theo:CPSstate}
	The information state $\Pi_{t}(Y_{0:t}, U_{0:t-1})(X_{t},$ $\hat{X}_{t})$ of the system illustrated in Fig. \ref{fig:2} is a function of  $p(X_{t} ~|~ Y_{0:t}, U_{0:t-1})$, $p(\hat{X}_{t} ~|~ \hat{Y}_{0:t}, U_{0:t-1})$, and $p(\hat{Y}_{0:t}~|~U_{{0:t}-1})$.
\end{proposition}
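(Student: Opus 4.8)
The plan is to start from the definition $\Pi_t=p(X_t,\hat X_t\mid Y_{0:t},U_{0:t-1})$ and to take it apart with the chain rule and Bayes' rule, the real work being a conditional-independence statement that separates the model's data stream from the actual system's data stream. As a preliminary step I would observe that, by the same reasoning used for Lemma~\ref{lem:x_t}, each of the three target densities $p(X_t\mid Y_{0:t},U_{0:t-1})$, $p(\hat X_t\mid\hat Y_{0:t},U_{0:t-1})$, and $p(\hat Y_{0:t}\mid U_{0:t-1})$ is itself independent of the control strategy $\textbf g$; this makes the assertion that ``$\Pi_t$ is a function of them'' well posed and consistent with Theorem~\ref{theo:y_t}, and it lets me suppress the superscript $\textbf g$ throughout and manipulate ordinary conditional densities.

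Next I would apply the chain rule to write $\Pi_t=p(\hat X_t\mid X_t,Y_{0:t},U_{0:t-1})\cdot p(X_t\mid Y_{0:t},U_{0:t-1})$, which already exhibits the first of the three densities. For the remaining factor the key is to show that, conditioned on the control history $U_{0:t-1}$, the model-side random variables $(X_{0:t},Y_{0:t})$ are independent of the actual-system-side random variables $(\hat X_{0:t},\hat Y_{0:t})$. This should follow from the mutual independence of the primitive random variables $X_0$, $\{W_t\}$, $\{Z_t\}$ (and their actual-system counterparts), the recursive form of the state and observation equations \eqref{eq:state}--\eqref{eq:observereal}, and the fact that every decision $U_s$ is a measurable function of past observations and past controls only (cf.\ \eqref{eq:control}), so that the two data streams interact solely through the realized controls; I would make this precise by an induction over $s=0,1,\dots,t-1$ that inserts the conditioning events $\{U_s=u_s\}$ one index at a time, rather than conditioning on the whole observation history at once. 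Granting this, $p(\hat X_t\mid X_t,Y_{0:t},U_{0:t-1})=p(\hat X_t\mid U_{0:t-1})$, and hence $\Pi_t=p(X_t\mid Y_{0:t},U_{0:t-1})\cdot p(\hat X_t\mid U_{0:t-1})$.

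The last step is routine: reinsert the actual observations and marginalize, $p(\hat X_t\mid U_{0:t-1})=\int p(\hat X_t\mid\hat Y_{0:t}=\hat y,\,U_{0:t-1})\,p(\hat Y_{0:t}=\hat y\mid U_{0:t-1})\,d\hat y$, which displays $\Pi_t$ as an explicit functional of $p(X_t\mid Y_{0:t},U_{0:t-1})$, $p(\hat X_t\mid\hat Y_{0:t},U_{0:t-1})$, and $p(\hat Y_{0:t}\mid U_{0:t-1})$, as claimed. I expect the middle step to be the main obstacle: because the controls tie the model and the actual system together, one cannot simply assert that conditioning on $U_{0:t-1}$ renders the two data streams independent — that conditioning event itself depends on an entire observation history — so the conditional independence has to be built up carefully; once it is in hand, the remainder is bookkeeping with Bayes' rule and Fubini's theorem. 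An alternative route would be an induction on $t$ driven by the update map $\phi_t$ of Theorem~\ref{theo:y_t}, checking that $\phi_t$ preserves the claimed functional form, but the direct computation above seems cleaner.
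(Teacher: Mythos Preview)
Your proposal is correct and follows essentially the same route as the paper: factor $\Pi_t$ via the chain rule into $p(\hat X_t\mid X_t,Y_{0:t},U_{0:t-1})\,p(X_t\mid Y_{0:t},U_{0:t-1})$, reduce the first factor to $p(\hat X_t\mid U_{0:t-1})$ by conditional independence of the two data streams given the control history, and then reinsert $\hat Y_{0:t}$ by marginalization. The paper carries out exactly these three steps, though it simply asserts that ``$\hat X_t$ does not depend on $X_t$ and $Y_{0:t}$'' where you (rightly) flag the conditional-independence argument as the one place requiring care.
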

\begin{proof}
	Recall $\Pi_{t}(Y_{0:t}, U_{0:t-1})(X_{t},\hat{X}_t) =p(X_{t},\hat{X}_t$ $~|~Y_{0:t}, U_{0:t-1})$. Next,
	\begin{align}	
		&p(X_{t},\hat{X}_t~|~Y_{0:t}, U_{0:t-1})\nonumber\\
		&=\frac{p(\hat{X}_t~|~ X_{t},Y_{0:t}, U_{0:t-1})\cdot p(X_t, Y_{0:t}, U_{0:t-1})}{p(Y_{0:t}, U_{0:t-1})} \nonumber\\
		&=\frac{p(\hat{X}_t~|~ U_{0:t-1})\cdot p(X_t, Y_{0:t}, U_{0:t-1})}{p(Y_{0:t}, U_{0:t-1})} \nonumber\\
		&= p(\hat{X}_t~|~ U_{0:t-1})\cdot p(X_{t} ~|~ Y_{0:t}, U_{0:t-1}),
		\label{theoCPSstate:1b}	
	\end{align}	
	where, in the second equality, we used the fact that $\hat{X}_t$ does not depend on $X_t$ and $Y_{0:t}$, and in the third equality we applied Bayes' rule. The first term in \eqref{theoCPSstate:1b}	 can be written as
	\begin{align}
		p(\hat{X}_t~|~ U_{0:t-1})=\int_{\mathscr{X}_{t}} p(\hat{X}_t~|~\hat{Y}_{0:t}, U_{0:t-1})\cdot p(\hat{Y}_{0:t}~|~ U_{0:t-1}) d\hat{Y}_{0:t}.
		\label{theoCPSstate:1c}	
	\end{align}
	Substituting \eqref{theoCPSstate:1c} into \eqref{theoCPSstate:1b}, the result follows.
\end{proof}

%%%%%%%%%%%%
%%%%Remark
%%%%%%%%%%%%
\begin{remark} \label{rem:infostate}
	The conditional probabilities $p(X_{t}~|~Y_{0:t}, U_{0:t-1})$ and $p(\hat{X}_{t}~|~\hat{Y}_{0:t}, \hat{U}_{0:t-1})$ can be computed from the following recursive equations starting from the initial priors $p(X_{0}~|~Y_{0}, U_{0})$ and $p(\hat{X}_{0}~|~\hat{Y}_{0}, \hat{U}_{0})$, respectively,
	\begin{align}
		p(X_{t}~|~Y_{0:t}, U_{0:t-1})) &= \theta_{t-1}\big[ p(X_{t-1}~|~Y_{0:t-1}, U_{0:t-2}), Y_{t}, U_{t-1} \big],\\
		p(\hat{X}_{t}~|~\hat{Y}_{0:t}, U_{0:t-1}) &= \hat{\theta}_{t-1}\big[ p(\hat{X}_{t-1}~|~\hat{Y}_{0:t-1}, U_{0:t-2}), \hat{Y}_{0:t}, U_{t-1} \big],
	\end{align}
	for all $t=0,1,\ldots, T-1,$ where $\theta_{t}$ and $\hat{\theta}_{t}$ are appropriate functions \cite{Malikopoulos2021}.
\end{remark}

%%%%%%%%%%%%
%%%%Remark
%%%%%%%%%%%%
\begin{remark} \label{rem:CPSstate}
	The information state $\Pi_{t}(Y_{0:t}, U_{0:t-1})(X_{t},\hat{X}_{t})$ of the system illustrated in Fig. \ref{fig:2} can be obtained by using standard approaches, i.e., \cite{Brand:1999aa,Gyorfi:2007aa}, to learn online the conditional probabilities $p(\hat{Y}_{0:t}~|~U_{{0:t}-1})$ while we operate the actual system. 
\end{remark}

Next, we show that after the information state becomes known through learning, then the separated control strategy of the system's model derived offline is optimal for the actual system.

%%%%%%%%%%%%
%%%%Theorem
%%%%%%%%%%%%
\begin{theorem} \label{theo:CPSmodel}
	Let $\textbf{g}\in\mathcal{G}^s$ be an optimal separated control strategy derived offline for the system's model which minimizes the expected total cost,
	\begin{align}
		J(\textbf{g};\hat{x}_{0:T})\coloneqq \mathbb{E}^{\textbf{g}}\Bigg[\sum_{t=0}^{T-1}\Big[ c_t(X_t,U_t)+ \beta\cdot|X_{t+1} - \hat{X}_{t+1}|^2\Big] + c_T(X_T)  \Bigg],
		\label{theo:CPSmodelgo}	
	\end{align}	
	in Problem 2. If $p(X_{t},\hat{X}_{t}~|~Y_{0:t}, U_{0:t-1})$ is known, then $\textbf{g}$ minimizes also the expected total cost of the actual system, 
	\begin{align}\label{eq:CPScost}
		\hat{J}(\textbf{g})=\mathbb{E}^{\textbf{g}}\left[\sum_{t=0}^{T-1} c_t(\hat{X}_t, U_t)+c_T(\hat{X}_T)\right],
	\end{align}
	in Problem 1.
\end{theorem}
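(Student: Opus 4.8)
The plan is to show that, once the information state is known, Problem~1 reduces to a Markov decision process on the time-invariant space of information states, and that the resulting dynamic-programming recursion has the same minimizers as the recursion of Theorem~\ref{theo:dp}; optimality of $\mathbf{g}$ for Problem~2 then transfers to $\hat J$.

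First I would rewrite $\hat J(\mathbf{g})$ in terms of $\Pi_t$. By hypothesis $p(X_t,\hat X_t\mid Y_{0:t},U_{0:t-1})$ is known --- which, by Proposition~\ref{theo:CPSstate} together with Remarks~\ref{rem:infostate} and \ref{rem:CPSstate}, is precisely what one obtains by learning $p(\hat Y_{0:t}\mid U_{0:t-1})$ online and running the two filtering recursions --- and by Remark~\ref{cor:lemU} conditioning additionally on $U_t$ leaves it unchanged; hence
\begin{align}
	\mathbb{E}^{\mathbf{g}}\big[c_t(\hat X_t,U_t)\mid Y_{0:t},U_{0:t}\big]
	= \int c_t(\hat x_t,U_t)\Big(\int \Pi_t(x_t,\hat x_t)\,dx_t\Big)\,d\hat x_t =: \hat c_t(\Pi_t,U_t),
\end{align}
a function of $(Y_{0:t},U_{0:t})$ only through $(\Pi_t,U_t)$, and likewise $\mathbb{E}^{\mathbf{g}}[c_T(\hat X_T)\mid Y_{0:T},U_{0:T-1}]=:\hat c_T(\Pi_T)$. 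Combining the tower property with the controlled transition $\Pi_{t+1}=\phi_t(\Pi_t,Y_{t+1},U_t)$ of Theorem~\ref{theo:y_t}, which does not depend on $\mathbf{g}$, yields $\hat J(\mathbf{g})=\mathbb{E}^{\mathbf{g}}\big[\sum_{t=0}^{T-1}\hat c_t(\Pi_t,U_t)+\hat c_T(\Pi_T)\big]$: a finite-horizon MDP whose state $\Pi_t$ ranges over a space that does not grow with $t$. Classical dynamic programming then gives an optimal policy that is a function of $\Pi_t$ alone, hence a separated strategy, characterized by $\hat V_T(\pi)=\hat c_T(\pi)$ and $\hat V_t(\pi)=\inf_{u\in\mathcal U_t}\big\{\hat c_t(\pi,u)+\mathbb{E}[\hat V_{t+1}(\phi_t(\pi,Y_{t+1},u))\mid\Pi_t=\pi,U_t=u]\big\}$, with $\hat V_0$ the optimal value; because $\Pi_t$ carries all the data relevant to $\hat J$, no strategy in the larger class $\mathcal{G}$ of Problem~1 can beat $\hat V_0$.

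The main step is to identify the minimizer of this recursion with the strategy $\mathbf{g}$ of Theorem~\ref{theo:dp}. I would expand the Problem~2 stage cost the same way --- using Lemmas~\ref{lem:y_t}--\ref{lem:x_t1ut} and Remark~\ref{cor:lemU} to write $\mathbb{E}^{\mathbf{g}}[c_t(X_t,U_t)+\beta|X_{t+1}-\hat X_{t+1}|^2\mid \Pi_t,U_t]$ purely in terms of $(\Pi_t,U_t)$ --- and then argue that, with $p(X_t,\hat X_t\mid Y_{0:t},U_{0:t-1})$ known, the discrepancy penalty forces the conditional law of the model trajectory induced by $(\Pi_t,U_t)$ to agree with that of the actual trajectory, so that the Problem~2 stage cost and $\hat c_t(\Pi_t,U_t)$ differ only by a term independent of the control. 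Consequently $\hat V_t$ and $V_t(\cdot\,;\hat x_t)$ have the same minimizing $u_t$ at every stage, $\mathbf{g}$ attains $\hat V_0$, and by the previous paragraph $\mathbf{g}$ minimizes $\hat J$ over $\mathcal{G}$. I expect this reconciliation of the two cost structures to be the hard part: Problem~2 penalizes the model's state plus an explicit model-versus-actual mismatch, whereas $\hat J$ scores the actual state directly, and making the identification rigorous requires pinning down exactly how the hypothesis that the information state is known --- and the role of the weight $\beta$ --- collapses the penalty along the optimal trajectory.
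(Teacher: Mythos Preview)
Your route is genuinely different from the paper's. The paper does not build an MDP on information states for Problem~1, does not introduce the reduced costs $\hat c_t(\Pi_t,U_t)$, and does not compare two Bellman recursions. Instead it argues in three lines: since $\mathbf{g}$ minimizes \eqref{theo:CPSmodelgo}, the penalty terms are driven to zero, i.e.\ $|X_{t+1}-\hat X_{t+1}|^2=0$ for every $t$; hence $X_t=\hat X_t$, so $c_t(X_t,U_t)=c_t(\hat X_t,U_t)$ and $c_T(X_T)=c_T(\hat X_T)$; therefore $J(\mathbf{g};\hat x_{0:T})=\hat J(\mathbf{g})$. That is the entire proof---no value-function matching, no analysis of $\hat V_t$ versus $V_t$.

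Your approach buys something the paper's argument does not supply: by casting Problem~1 as an MDP on $\Pi_t$ and invoking dynamic programming, you would actually establish that no strategy in $\mathcal G$ beats $\mathbf{g}$ on $\hat J$, whereas the paper only shows $J(\mathbf{g})=\hat J(\mathbf{g})$ without comparing $\hat J(\mathbf{g})$ to $\hat J(\mathbf{g}')$ for competitors $\mathbf{g}'$. The price is the extra machinery and the step you flag as hard. On that step, note that your formulation---``the Problem~2 stage cost and $\hat c_t(\Pi_t,U_t)$ differ only by a term independent of the control''---is stronger than what is needed and may not hold as stated, since one cost evaluates $c_t$ at $X_t$ and the other at $\hat X_t$; the paper sidesteps this by asserting $X_t=\hat X_t$ pathwise along the optimal trajectory rather than claiming a control-independent offset at every $(\pi,u)$. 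If you pursue your route, the cleaner version of the crux is exactly the paper's: show that optimality in Problem~2 forces the discrepancy to vanish, and then transfer that identity into your $\hat V_t$ recursion.
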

\begin{proof}
	If $p(X_{t},\hat{X}_{t}~|~Y_{0:t}, U_{0:t-1})$ is known, then, for all $t=0,\ldots,T-1$, $U_{t} = g_t\big(p(X_{t},\hat{X}_{t}~|~Y_{0:t}, U_{0:t-1})\big)$ minimizes \eqref{theo:CPSmodelgo}, which implies that
	\begin{align}			
		|X_{t+1} - \hat{X}_{t+1}|^2 = 0,
		\label{Xhat}	
	\end{align}	
	for all $t=0,\ldots,T-1$. Hence $c_t(X_t,U_l) = c_t(\hat{X}_t,U_l)$ and $c_T(X_T) = c_T(\hat{X}_T)$. 
	Therefore,
	\begin{align}			
		J(\textbf{g};\hat{x}_{0:T}) &= \mathbb{E}^{\textbf{g}}\Bigg[\sum_{t=0}^{T-1} c_t(X_t,U^{1:K}_t)
		+c_T(X_T)  \Bigg]\nonumber\\
		&=\mathbb{E}^{\textbf{g}}\Bigg[\sum_{t=0}^{T-1} c_t(\hat{X}_t,U^{1:K}_t)
		+c_T(\hat{X}_T) \Bigg] =\hat{J}(\textbf{g}).
		\label{theoCPS:1c}	
	\end{align}		
\end{proof}

%%%%%%%%%%%%%%%%%%%%%%%%%%%%%%%%%%%%
%%%%%%%%%%%%%%%%%%%%%%%%%%%%%%%%%%%%
%SECTION IV: Example
%%%%%%%%%%%%%%%%%%%%%%%%%%%%%%%%%%%%
%%%%%%%%%%%%%%%%%%%%%%%%%%%%%%%%%%%%
\section{Illustrative Example} \label{sec:4}

In this section, we present a simple example to illustrate how to derive the optimal control strategy for a system that evolves for a time horizon $T=2$ using a model of the system and separating the learning and control tasks. The initial state, $X_0$, and disturbance, $W_0,$ of the system (primitive random variables) are Gaussian random variables with zero mean, variance $1$, and covariance $0.5$. The state of the actual system is denoted  by $\hat{X}_t, ~t=0, 1,2,$ and evolves as follows:

\begin{align}\label{eq:example1}
	\hat{X}_0 &= X_0, \nonumber\\
	\hat{X}_1 &= \hat{X}_0 +U_0 + W_0 = X_0 +U_0 + W_0, \nonumber\\
	\hat{X}_2 &= \hat{X}_1 + U_1,
\end{align}
and the observation equations are
\begin{align}\label{eq:example2}
	\hat{Y}_t = \hat{X}_{t}, \quad t=0, 1,2.			
\end{align}

The control action $U_t,$ $t = 0,1,$ of the system is given by a control strategy $\textbf{g}=\{g_t;~ t=0,1\}$, $\textbf{g}\in\mathcal{G}$,
\begin{align}\label{eq:controlexample}
	U_t =g_t(\hat{Y}_{0:t}, U_{0:t-1}),
\end{align}
where  $g_t$ is the control law, which is a measurable function $g_t: (\mathcal{Y}\times \mathcal{U}_{t-1},\mathscr{Y}\otimes\mathscr{U}_{t-1})\to (\mathcal{U}_t, \mathscr{U}_t)$.
The feasible sets of decisions $\mathcal{U}_t$ at $t =0,1$ consist of $U_0 = g_0(\hat{X}_0),$ and $U_1 = g_1(\hat{X}_0, \hat{X}_1, U_0).$
The problem is to derive the optimal control strategy $\textbf{g}^*\in\mathcal{G}$ of the system in \eqref{eq:example1} that minimizes the following cost:
\begin{align}
	J(\textbf{g})&=\min_{u_0\in\mathcal{U}_0, u_1\in\mathcal{U}_1}\frac{1}{2}\mathbb{E}^{\textbf{g}}\left[(\hat{X}_2)^2 + (U_1)^2\right].
\end{align}

We consider that the evolution of the actual system in \eqref{eq:example1} is not known. However, we have the following model available to  derive the optimal strategy $\textbf{g}\in\mathcal{G}$:
\begin{align}\label{eq:example11}
	X_0 &= X_0,\nonumber\\
	X_1 &= 2 X_0 + 3 U_0 + 4 W_0, \nonumber\\
	X_2 &= 2 X_1 + 4 U_1,
\end{align}
while the observation equations are
\begin{align}\label{eq:example2a}
	Y_t = X_{t}, \quad t= 0,1,2.			
\end{align}

The difference between \eqref{eq:example1} and \eqref{eq:example11} represents a typical discrepancy that exist between a system and the system's model.
The evolution of both the actual system and system's model at $t=1$ and $t=2$ is illustrated in Fig. \ref{fig:4} and Fig. \ref{fig:5}, respectively.

\begin{figure}
	\centering
	\includegraphics[width=0.7\linewidth, keepaspectratio]{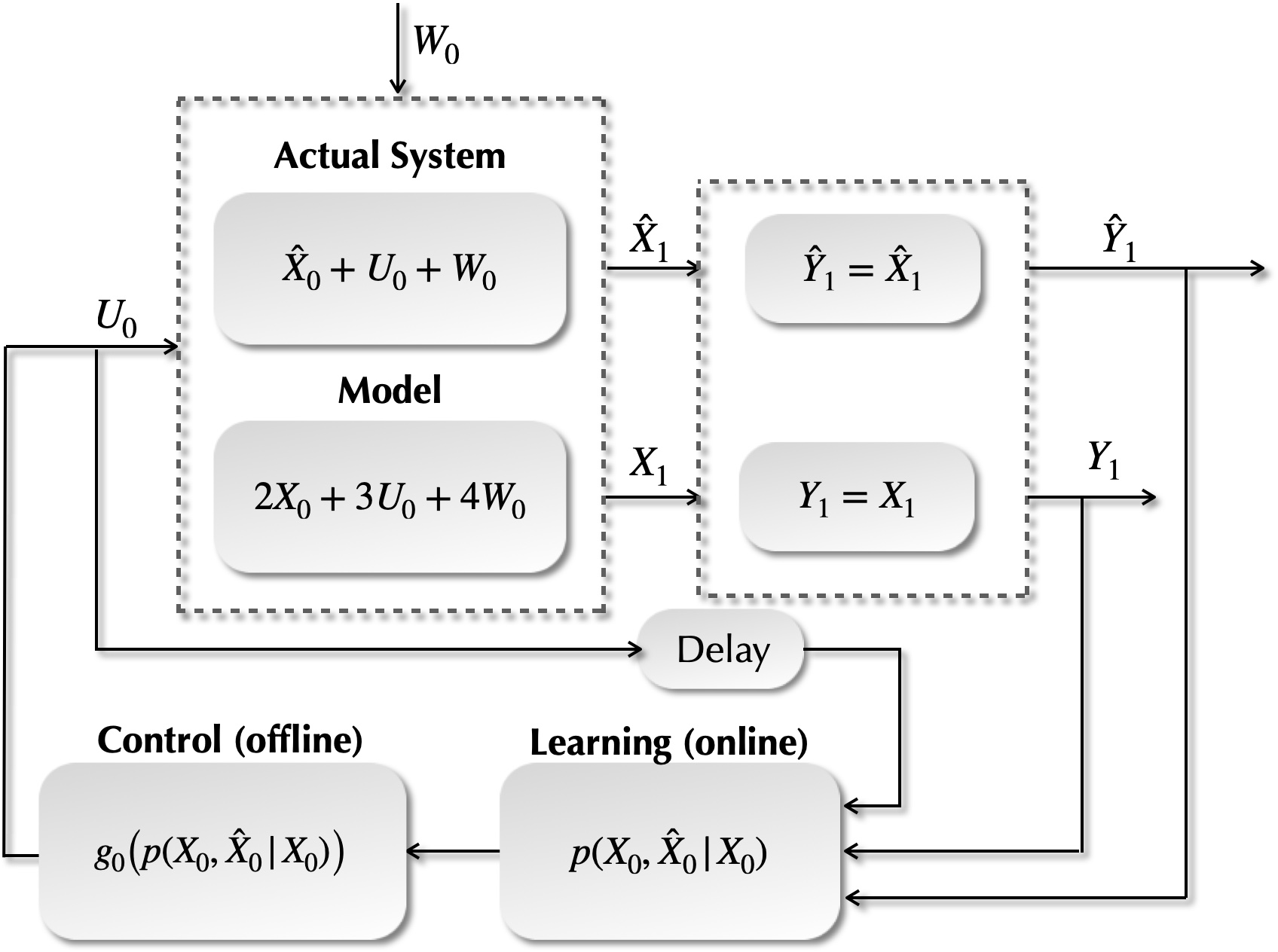} 
	\caption{The evolution of both the actual system and system's model at $t=1$.}%
	\label{fig:4}%
\end{figure}
\begin{figure}
	\centering
	\includegraphics[width=0.7\linewidth, keepaspectratio]{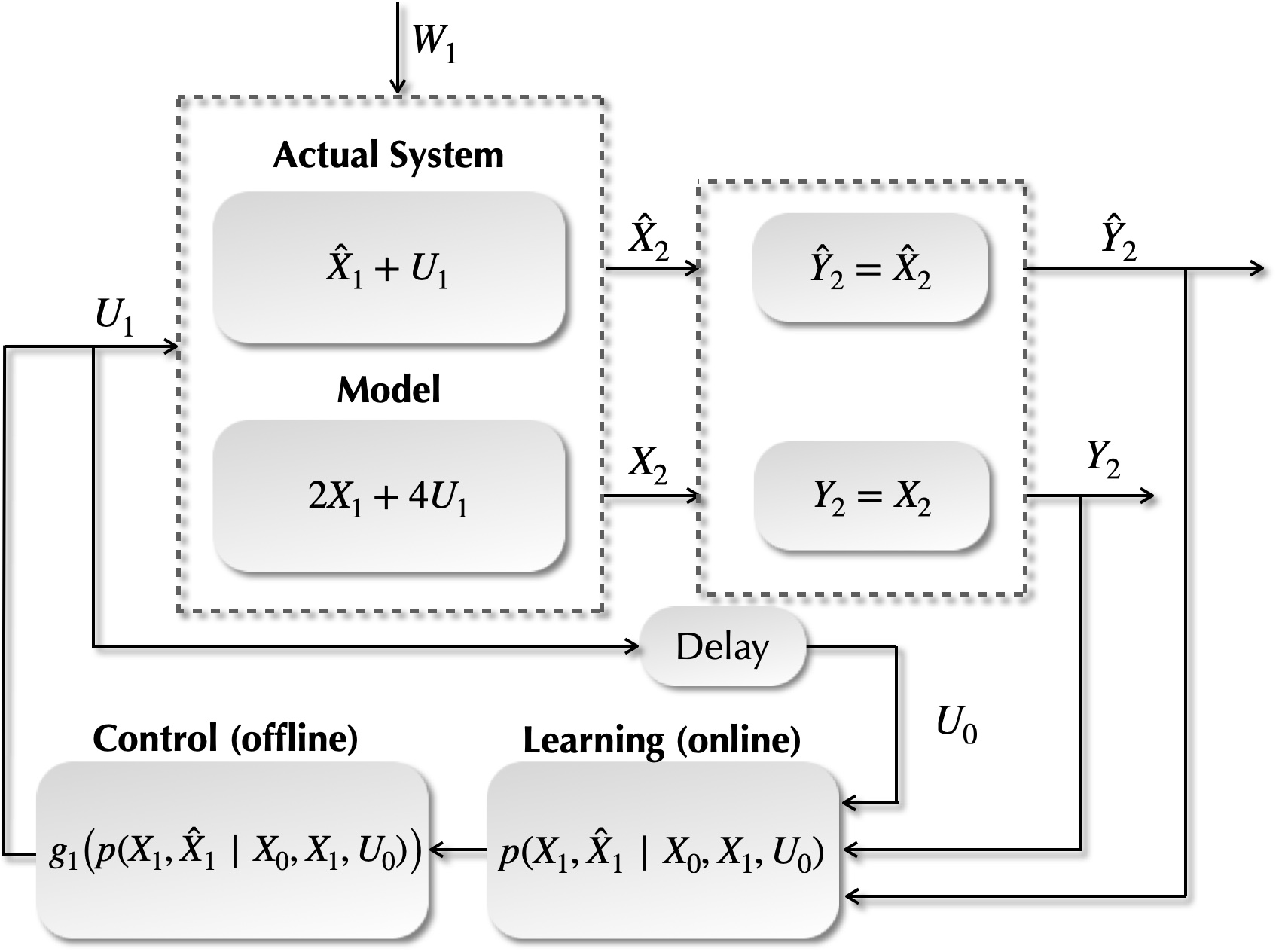} 
	\caption{The evolution of both the actual system and system's model at $t=2$.}%
	\label{fig:5}%
\end{figure}

\subsection{Optimal Control Strategy of the Actual System}
Before we proceed with the solution given by separating learning and control tasks using the system's model, we derive the optimal control strategy $\textbf{g}^*\in\mathcal{G}$ of the actual system using  \eqref{eq:example1}.
The total expected cost is
\begin{align}\label{eq:example6}
	J(\textbf{g})&=\min_{u_0\in\mathcal{U}_0, u_1\in\mathcal{U}_1}\frac{1}{2}\mathbb{E}^{\textbf{g}}\left[(\hat{X}_2)^2 + (U_1)^2\right]\nonumber\\
	&=\min_{u_0\in\mathcal{U}_0, u_1\in\mathcal{U}_1}\frac{1}{2}\mathbb{E}^{\textbf{g}}\left[(\hat{X}_1 + U_1)^2 + (U_1)^2 \right]\nonumber\\
	&=\min_{u_0\in\mathcal{U}_0, u_1\in\mathcal{U}_1}\frac{1}{2}\mathbb{E}^{\textbf{g}}\left[(X_0 + U_0 +W_0 + U_1)^2 + (U_1)^2 \right].
\end{align}

Since the primitive random variables are Gaussian with zero mean, variance $1$, and covariance $0.5,$ the problem \eqref{eq:example6} has a unique optimal solution which can be computed in a straightforward manner. The optimal solution is
\begin{align}\label{eq:example8}
	U_0 = \frac{1}{2}X_0,\quad U_1=-\frac{1}{4}X_0.
\end{align}

\subsection{Solution Given by Separating Learning and Control}
In practice, the evolution of the actual system \eqref{eq:example1} is not known. However, a model of the system is available that can be used to derive the optimal control strategy. Such  model-based control approaches cannot effectively facilitate optimal solutions with performance guarantees due to the discrepancy between the model and the actual system.

To address the problem in \eqref{eq:example6}, we apply the framework presented in Section \ref{sec:3}. More specifically, we use the model \eqref{eq:example11} that is available and seek to derive the separated control strategy $\textbf{g}\in\mathcal{G}^s$, $\textbf{g}=\{g_t;~t = 0,1\}$, where the control law is of the form $g_t\big(\mathbb{P}(X_t,\hat{X}_t~|~Y_{0:t}, U_{0:t-1})\big)$, that minimizes the following expected total cost given in Theorem \ref{theo:CPSmodel},

\begin{align}\label{eq:cost1}
	&J(\textbf{g};\hat{x}_{0:2})\nonumber\\
	&=\min_{u_0\in\mathcal{U}_0, u_1\in\mathcal{U}_1}\frac{1}{2}\mathbb{E}^{\textbf{g}}\left[(X_2)^2 + (U_1)^2 +\beta (X_1-\hat{X}_1)^2 + \beta (X_2 - \hat{X}_2)^2)~|~X_0, X_1, U_0\right].
\end{align}
From \eqref{eq:example11} and taking $\beta=1$, \eqref{eq:cost1} becomes

\begin{align}\label{eq:cost2}
	&J(\textbf{g};\hat{x}_{0:2})\nonumber\\
	&=\min_{u_0\in\mathcal{U}_0, u_1\in\mathcal{U}_1}\frac{1}{2}\mathbb{E}^{\textbf{g}}\Big[(2 X_1 + 4 U_1)^2 + (U_1)^2 + (X_1-\hat{X}_1)^2 + (X_2 - \hat{X}_2)^2)~|~X_0, X_1, U_0\Big]\nonumber\\
	&=\min_{u_0\in\mathcal{U}_0, u_1\in\mathcal{U}_1}\frac{1}{2}\mathbb{E}^{\textbf{g}}\Big[\big(2 (2 X_0 + 3 U_0 + 4 W_0) + 4 U_1\big)^2 + (U_1)^2 + (X_1-\hat{X}_1)^2 \nonumber\\
	&+ (X_2 - \hat{X}_2)^2)~|~X_0, X_1, U_0\Big].
\end{align}

To achieve the minimum  in \eqref{eq:cost2}, the control action $U_0$ and $U_1$ should make the last two terms equal to zero, namely
\begin{align}
	&\mathbb{E}^{\textbf{g}}[X_1 - \hat{X}_1] = \mathbb{E}^{\textbf{g}}[2 X_0 + 3 U_0 + 4 W_0-\hat{X}_1~|~X_0] = 0, \label{eq:cost3a}\\
	& \mathbb{E}^{\textbf{g}}[X_2 - \hat{X}_2] = \mathbb{E}^{\textbf{g}}[2 X_1 + 4 U_1 - \hat{X}_2~|~X_0, X_1, U_0] = 0. \label{eq:cost3b}
\end{align}

From \eqref{eq:cost3a}, it follows that
\begin{align}
	& \mathbb{E}^{\textbf{g}}[U_0] = \mathbb{E}^{\textbf{g}}\Big[ \frac{\hat{X}_1 - 2 X_0 - 4W_0}{3}~|~X_0\Big] = g_0\big(p(X_0,\hat{X}_0~|~X_0)\big). \label{eq:cost4a}
\end{align}

Similarly, from \eqref{eq:cost3b}, it follows that
\begin{align}
	\mathbb{E}^{\textbf{g}}[U_1] &= \mathbb{E}^{\textbf{g}}\Big[\frac{\hat{X}_2 - 4 X_0 - 6 U_0 - 8 W_0}{4}~|~X_0, X_1, U_0\Big] \nonumber\\
	&= g_1 \big(p(X_1,\hat{X}_1~|~X_0, X_1, U_0)\big). \label{eq:cost4b}
\end{align}

Thus, $U_0$ and $U_1$ in \eqref{eq:cost4a} and \eqref{eq:cost4b}, respectively, are parameterized with respect to the realizations of the state of the actual system, i.e., $\hat{x}_0= x_0$, $\hat{x}_1$ and $\hat{x}_2$, and  make the last two terms in \eqref{eq:cost2} vanish. 

Next, consider that both the actual system and system's model evolve over a time horizon $T=2$ (see Figs. \ref{fig:4} and \ref{fig:5}) using the control actions $U_0$ and $U_1$ in \eqref{eq:cost4a} and \eqref{eq:cost4b}. As we observe the realizations of $X_0$, $X_1$, and $U_0$, we learn  the information states $p(X_{0},\hat{X}_{0} ~|~ X_{0})$ and $p(X_1, \hat{X}_{1} ~|~ X_{0}, X_{1}, U_{0})$ of the system. From Proposition \ref{theo:CPSstate}, it follows that to learn $p(X_{0},\hat{X}_{0} ~|~ X_{0})$ and $p(X_1, \hat{X}_{1} ~|~ X_{0}, X_{1}, U_{0}),$ we essentially need to learn the conditional probabilities $p(X_{0} ~|~ X_{0})$, $p(X_{1} ~|~ X_{0},X_{1}, U_{0})$, $p(\hat{X}_{0} ~|~ \hat{X}_{0}, \hat{X}_{1}, U_{0})$, and $p(\hat{X}_{0},\hat{X}_{1}~|~U_{0},U_{1})$. The implication of learning the information states is that we can compute the realizations of $U_0$ and $U_1$ in \eqref{eq:cost4a} and \eqref{eq:cost4b}.

By substituting \eqref{eq:cost4a} in \eqref{eq:cost2}, we obtain
\begin{align}\label{eq:cost5}
	J(\textbf{g};\hat{x}_{0:2}) & = \min_{u_0\in\mathcal{U}_0, u_1\in\mathcal{U}_1}\frac{1}{2}\mathbb{E}^{\textbf{g}}\Big[\big(2 (2 X_0 + 3 \frac{\hat{X}_1 - 2 X_0 - 4W_0}{3} + 4 W_0) + 4 U_1\big)^2 \nonumber\\
	&+ (U_1)^2  ~|~X_0, X_1, U_0\Big] \nonumber\\
	& =  \min_{u_0\in\mathcal{U}_0, u_1\in\mathcal{U}_1}\frac{1}{2}\mathbb{E}^{\textbf{g}}\Big[\big(2 (\hat{X}_1 + 4 U_1\big)^2 + (U_1)^2  ~|~X_0, X_1, U_0\Big]\nonumber\\
	& =  \min_{u_0\in\mathcal{U}_0, u_1\in\mathcal{U}_1}\frac{1}{2}\mathbb{E}^{\textbf{g}}\Big[\big(2 (X_0 +U_0 + W_0 + 4 U_1\big)^2 + (U_1)^2  ~|~X_0, X_1, U_0\Big].
\end{align}

Next, to find the minimum in \eqref{eq:cost5} at time $t=0$, we take the partial derivative with respect to $U_0$

\begin{align}\label{eq:cost6}
	\frac {\partial \frac{1}{2}\mathbb{E}^{\textbf{g}}\Big[\big(2 (X_0 +U_0 + W_0 + 4 U_1\big)^2 + (U_1)^2  \Big]}{\partial U_0}  = \mathbb{E}^{\textbf{g}}\Big[\big(2 (X_0 +U_0 + W_0 + 4 U_1\big)  \Big] = 0.
\end{align}

Note that,  at $t=0$, $U_1$ is not taken into consideration yet, hence

\begin{align}\label{eq:cost7}
	\mathbb{E}^{\textbf{g}}\Big[\big(2 (X_0 +U_0 + W_0\big) \Big] = 0,
\end{align}

which yields the same solution $U_0 = \frac{1}{2}X_0$ as in \eqref{eq:example8}.

Next, substituting \eqref{eq:cost4b} into  the system's model $X_2 = 2 X_1 + 4 U_1$, we obtain

\begin{align}\label{eq:cost8}
	X_2 &= 2 X_1 + 4 \frac{\hat{X}_2 - 4 X_0 - 6 U_0 - 8 W_0}{4} \nonumber\\
	&= 2 (2 X_0 + 3 U_0 + 4 W_0) + \hat{X}_2 - 4 X_0 - 6 U_0 - 8 W_0\nonumber\\
	&= \hat{X}_2 ,
\end{align}
hence the expected total cost $J(\textbf{g};\hat{x}_{0:2})$ in \eqref{eq:cost1} becomes

\begin{align}\label{eq:cost9}
	J(\textbf{g};\hat{x}_{0:2}) &= \min_{u_0\in\mathcal{U}_0, u_1\in\mathcal{U}_1}\frac{1}{2}\mathbb{E}^{\textbf{g}}\Big[(\hat{X}_2)^2 + (U_1)^2 )\Big] \nonumber\\
	&= \min_{u_0\in\mathcal{U}_0, u_1\in\mathcal{U}_1}\frac{1}{2}\mathbb{E}^{\textbf{g}}\Big[(\hat{X}_1 + U_1)^2 + (U_1)^2 )\Big].
\end{align}

Next, to find the minimum in \eqref{eq:cost9} at time $t=1$, we take the partial derivative with respect to $U_1$

\begin{align}\label{eq:cost10}
	\frac {\partial \frac{1}{2}\mathbb{E}^{\textbf{g}}\Big[(\hat{X}_1 + U_1)^2 + (U_1)^2 )\Big]}{\partial U_1}  = \mathbb{E}^{\textbf{g}}\Big[\big( \hat{X}_1 + U_1 + U_1 \big)  \Big] = 0
\end{align}

or

\begin{align}\label{eq:cost11}
	\mathbb{E}^{\textbf{g}}\Big[\big( X_0 +U_0 + W_0 + 2 U_1 \big)  \Big] = 0
\end{align}

which yields the same solution $U_1=-\frac{1}{4}X_0$ as in \eqref{eq:example8}.

%%%%%%%%%%%%%%%%%%%%%%%%%%%%%%%%%%%%%%%%%%%%%%%%%%%%%%%%%%%%%%%%%%%%%%%%%%%%%%%%
%%%%%%%%%%%%%%%%%%%%%%%%%%%%%%%%%%%%
%SECTION VI: CONCLUDING REMARKS AND DISCUSSION
%%%%%%%%%%%%%%%%%%%%%%%%%%%%%%%%%%%%
\section{Concluding Remarks and Discussion}\label{sec:5}
In most CPS applications, we typically use a model to derive optimal control strategies. Such model-based control approaches cannot effectively facilitate optimal solutions with performance guarantees due to the discrepancy between the model and the actual CPS. On the other hand, in most CPS there is a large volume of data which is added to the system gradually in real time and not altogether in advance. Thus, traditional supervised learning approaches cannot always facilitate robust solutions using data derived offline. By contrast, applying reinforcement learning approaches directly to the actual CPS might impose negative implications on safety and robust operation of the system. 

In this chapter, we presented a theoretical framework that  circumvents these challenges by developing data-driven  approaches at the intersection of learning and control. 
We used the actual system that we seek to optimally control online, in parallel with a  model of the system that we have available.  We established an information state which is the conditional joint probability distribution of the states of the model and the actual system at time $t$ given all data available of the model up until time $t$. Then, we used this information state in conjunction with the model to derive offline separated control strategies. Since the optimal strategies are derived offline, the state of the actual system  is not known,  and thus the optimal strategy of the model was parameterized with respect to all realizations of the  state of the actual system. However,  since the control strategy and the process of estimating the information state are separated,  we are able to learn the information state of the system online, while we operate simultaneously the model and the actual system in real time. Namely,  the optimal strategy derived for the  model offline, which is parameterized with respect to the state of the actual system, is used to operate the actual system in parallel with the model. As we operate both the actual system and the model and collect data, we can learn the information state online. We showed that when the information state becomes known online through learning, the separated control strategy of the model derived offline is optimal for the actual system. 

The framework departs from traditional model-based and supervised (or unsupervised)
learning approaches. Using separated control strategies, we can combine fundamental methods of control theory and learning aimed at facilitating optimal solutions with performance guarantees for a wide range of CPS applications such as emerging mobility systems,  mobility markets, networked control systems, communication networks, smart power grids, power systems, social media platforms, and internet of things.

\section{Acknowledgments}
This research was supported by NSF under Grants CNS-2149520 and CMMI-2219761.

\bibliographystyle{IEEEtran}
\bibliography{TAC_learn_Andreas,TAC_Ref_Andreas,TAC_Ref_IDS,TAC_Ref_structure,TAC2_learn}

\end{document}